\newcommand{\cB}{{\mathcal B}}
\newcommand{\cF}{{\mathcal F}}
\newcommand{\cG}{{\mathcal G}}
\newcommand{\cN}{{\mathcal N}}
\newcommand{\cP}{{\mathcal P}}
\newcommand{\cR}{{\mathcal R}} 
\newcommand{\cT}{{\mathcal T}} 
\definecolor{fashionfuchsia}{rgb}{0.96, 0.0, 0.63}
\newcommand{\blue}{\textcolor{black}}
\newcommand{\orange}{\textcolor{black}}
\newcommand{\teal}{\textcolor{black}}
\newtheorem{theorem}{Theorem}[section]
\newtheorem{lemma}[theorem]{Lemma}
\newtheorem{proposition}[theorem]{Proposition}
\newtheorem{observation}[theorem]{Observation}
\author[Katharina T. Huber et al.]{Katharina T. Huber\affiliationmark{1}  \and Simone Linz\affiliationmark{2}\thanks{Supported by the New Zealand Marsden Fund.}
  \and Vincent Moulton\affiliationmark{1}}
\title[Cherry picking in forests]{Cherry picking in forests:\\ A new~characterization for the 
	unrooted hybrid number of two phylogenetic trees}
\affiliation{
University of East Anglia, Norwich, UK\\
University of Auckland, Auckland, New Zealand}
\keywords{phylogenetic network, forest, TBR distance, cherry picking sequence, hybrid number}
\begin{document}
\publicationdata
{vol. 27:2}{2025}{15}{10.46298/dmtcs.11633}{2023-07-25; 2023-07-25; 2024-02-27; 2024-12-03}{2025-05-08}
\maketitle
\begin{abstract}
Phylogenetic networks are a special type of graph which generalize phylogenetic trees and 
that are used to model non-treelike evolutionary processes such as recombination
and hybridization. In this paper, we consider  
{\em unrooted} phylogenetic networks, i.e. simple, connected graphs $\cN=(V,E)$ 
with leaf set $X$, for $X$ some set of species, in which every internal vertex in $\cN$ has degree three. 
One approach used to construct such phylogenetic networks is to take as input a 
collection $\cP$ of phylogenetic trees and to look for a network $\cN$ that contains each tree in $\cP$
and that minimizes the quantity $r(\cN) = |E|-(|V|-1)$ over all such networks.
Such a network always exists, and the quantity $r(\cN)$ for an optimal network $\cN$
is called the {\em hybrid number of $\cP$}.  In this paper, we give a new characterization  
for the hybrid number in case $\cP$ consists of two trees. 
This characterization is given in terms of a {\em cherry picking sequence}
for the two trees, although to prove that our characterization holds we 
need to define the sequence more generally for two forests. 
Cherry picking sequences have been intensively studied
for collections of {\em rooted} phylogenetic trees, but our new sequences
are the first variant of this concept that can be applied in the unrooted setting. 
Since the hybrid number of two trees is equal to the well-known {\em tree bisection and \blue{reconnection} distance} 
between the two trees, our new characterization also provides 
an alternative way to understand this important tree distance.
\end{abstract}

\section{Introduction}
Phylogenetic networks are a special type of graph which generalize phylogenetic trees and 
that are used to model non-treelike evolutionary processes such as recombination
and hybridization \cite{H10}. There are several classes of 
phylogenetic networks (see e.g. \cite{KP22} for a recent review), but in this paper, we shall restrict our attention to 
{\em unrooted} phylogenetic networks (see e.g.~\blue{\cite{IK18}}). Such a network is a simple, connected graph $\cN=(V,E)$ 
with leaf set $X$, where $X$ is some set of species, \blue{and} every internal vertex in $\cN$ has degree three. 
The {\em reticulation number} $r(\cN)$ of $\cN$ is defined as $|E|-(|V|-1)$.
If $r(\cN)=0$, then $\cN$ has no cycles, in which case it is 
called an {\em (unrooted binary) phylogenetic tree} on $X$. An example
of a phylogenetic network $\cN$ with leaf set $X=\{1,\dots,6\}$
and $r(\cN)=1$ is given in the left of Fig.~\ref{fig:prelims} (the 
tree in the centre of this figure is a phylogenetic tree); formal definitions are given in the next section.

One approach commonly used to construct  an unrooted phylogenetic network is to take as input a 
set $\cP$ of phylogenetic trees and to infer
a network that embeds or `displays'~\cite{IK18, kanj08} each tree in $\cP$ (see Fig.~\ref{fig:prelims} for an example of a  phylogenetic tree displayed in a network).
Such a network always exists since, for example one can be constructed using the so-called display graph~\cite{bryant06} 
for all trees in $\cP$ and subsequently \blue{refining} the vertices of that graph whose degree is greater than three \blue{by introducing new edges}. From 
a \blue{biological} viewpoint, it is of interest to not reconstruct \blue{an arbitrary} unrooted
phylogenetic network that displays each element in $\cP$, but one whose reticulation 
number is minimized over all such networks. This 
is known as the {\em Unrooted Hybridization Number} problem ~\cite{IK18}.  If $\cN$ is an unrooted 
phylogenetic network that displays each tree in $\cP$ and whose reticulation number 
is minimized over all such networks, then $r(\cN)$ 
is denoted by $h(\cP)$ and is called the {\em hybrid number of $\cP$}.

\begin{figure}[t]
	\center
	\scalebox{0.8}{\input{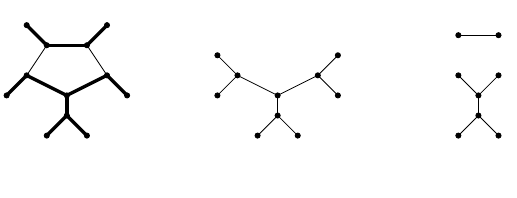_t}}
	\caption{Left: A phylogenetic network $\cN$ with leaf set $X=\{1,\dots,6\}$. Middle: A
		forest $\cF$ that contains a single phylogenetic tree.
		Right: A forest $\cF'$ comprising two components.  Both $\cF$ and $\cF'$ are displayed by $\cN$; 
			the way in which $\cF'$ is displayed by $\cN$ is indicated in bold.
		}
	\label{fig:prelims}
\end{figure}

Intriguingly, in \cite[Theorem 3]{IK18} it is shown that, if $\cP$ contains only 
two phylogenetic trees $\cT$ and $\cT'$ on the same leaf set, then 
 $h(\cP)$ is in fact
equal to the  {\em tree bisection and \blue{reconnection} (TBR) distance}
between $\cT$ and $\cT'$ (see e.g. \cite{allen01} and also \cite{SS03}). Informally, this
distance is defined as the minimum number of so-called 
tree bisection and \blue{reconnection} moves required to convert $\cT$ into
$\cT'$, where such a move on a phylogenetic tree is essentially 
the process of cutting an edge in that tree and then 
reconnecting the resulting two trees by introducing 
a new edge between two edges, one from each of the two trees. 
The TBR distance
has been studied for several years, and is still a topic of
current research~\cite{CFS15,kelk19a,LK19,stjohn17,whidden13}. Furthermore, computing 
this distance is an NP-hard optimization problem~\cite{allen01}.
Hence, gaining a better understanding
of the hybrid number of two  phylogenetic trees is not only of
interest for constructing phylogenetic networks but
also for shedding new light on computing the TBR distance between them.

In this paper, we establish a new characterization for the hybrid number of two  
phylogenetic trees  (Theorem~\ref{main}) and, therefore the TBR distance (Theorem~\ref{theo:tbr}).
Our approach is motivated by the concept of a {\em cherry picking
sequence} that was introduced in~\cite{HLS13} in the context of two {\em rooted} (binary) phylogenetic trees.
A {\em cherry} in such a tree is a pair of
	leaves that are adjacent to the same vertex.
Roughly speaking, for a pair of rooted  phylogenetic
trees $\cR$ and $\cR'$ on the same leaf set $X$, a cherry 
picking sequence for $\cR$ and $\cR'$ is an ordering or sequence \blue{$\sigma$} of the elements in 
$X$, say $\blue{\sigma}=(x_1,x_2,\ldots,x_n)$, $n=|X|$, such that 
each $x_i$, $i\in\{1,2,\ldots,n\}$, is a leaf 
in a cherry in the phylogenetic trees obtained from $\cR$ and $\cR'$ by pruning off $x_1,x_2,\ldots,x_{i-1}$.
In \cite{HLS13} it is shown that a cherry picking sequence 
for two rooted phylogenetic trees $\cR$ and $\cR'$ exists 
precisely if  $\cR$ and $\cR'$ can be embedded in a {\em rooted} binary phylogenetic network $\cN$ on $X$ 
that satisfies certain structural properties (namely it is `time consistent' and `tree-child').
The number of elements $x_i$ in $\sigma$ with $i\in\{1,2,\ldots,n\}$ 
for which the two trees obtained from $\cR$ and $\cR'$ by pruning off $x_1,x_2,\ldots,x_{i-1}$  
have cherries $\{x_i,y_i\}$ and $\{x_i,y_i'\}$ such that $y_i\ne y_i'$
is called the {\em weight} of $\sigma$. In addition, it is shown in the same paper
that the minimum weight over all cherry picking sequences for $\cR$ and $\cR'$ 
equates to the minimum number of reticulations (vertices with in-degree two and out-degree one) 
that are required to display $\cR$ and $\cR'$ 
by a rooted phylogenetic network that is time consistent and tree-child. 
\blue{Since the publication of~\cite{HLS13}, cherry picking sequences
have been extensively studied and generalized to larger classes of rooted phylogenetic networks and  
arbitrarily large collections of rooted phylogenetic trees 
(see e.g.~\cite{janssen21,LS19}). This theoretical work has, in turn, resulted in the development of practical algorithms to \teal{reconstruct} rooted phylogenetic networks from a set of rooted phylogenetic trees~\cite{bernardi,vanIersel22}. In a related line of research, cherry picking operations have recently been used to define and compute distances between phylogenetic networks~\cite{landry23, landry}.}

In this paper, we shall introduce an \blue{analog of a
cherry picking sequence} for two unrooted phylogenetic trees $\cT$ and $\cT'$ on the same leaf set. In particular, 
the main result of this paper (Theorem~\ref{main}) proves that the hybrid number and, consequently, 
the TBR distance of $\cT$ and $\cT'$ can be given in 
terms of the minimum weight of a cherry picking sequence using 
an alternative weight to the rooted setting, and
the minimum is again taken over all such sequences for $\cT$ and $\cT'$. Interestingly, to obtain this
result we found that it is necessary to define a cherry picking sequence for
two {\em forests} on the same leaf set (a forest is a collection of phylogenetic trees, see e.g. \blue{the right-hand side of} Fig.~\ref{fig:prelims}), 
and then apply such a sequence
to the special case where the two forests are both phylogenetic trees.
Intuitively, this is the case because, instead of only pruning 
leaves of cherries as in the rooted case  that is described in the previous paragraph, a 
cherry picking sequence whose weight characterizes the 
hybrid number of $\cT$ and $\cT'$ allows for operations that split a 
phylogenetic tree into two smaller phylogenetic trees by deleting edges that are not necessarily pendant.
It is also interesting to note that our definition of a  cherry picking sequence
has some similarity with certain data reduction rules that 
have been recently introduced for establishing 
parameterized algorithms to compute the TBR distance 
\blue{between two phylogenetic trees} \cite{LK19}.
	
We now summarize the \blue{content} of the rest of this paper.
In Section~\ref{sect:prelim}, we present some preliminaries.
Subsequently, in Section~\ref{sect:picking}, we introduce the main concept
of a cherry picking sequence for two forests, and show that such a sequence 
always exists for any pair of forests. In Section~\ref{sect:bound1},
we show that if $\sigma$  is  a cherry picking sequence for two forests $\cF$ 
and $\cF'$ on $X$, then there is an unrooted phylogenetic network $\cN$ on $X$
that displays $\cF$ and $\cF'$ such that $r(\cN)$ is at most the weight of $\sigma$ (Theorem~\ref{uppercherry}).
In Section \ref{sect:tech}, we prove \blue{four}  technical lemmas  that are concerned with how forests are displayed in networks.
These lemmas allow us to complete the proof of our main result (Theorem~\ref{main})
in Section~\ref{sect:bound2}, where we prove that if $\cN$ is a phylogenetic network that 
displays two forests $\cF$ and $\cF'$ on $X$, then 
there is a cherry picking sequence for $\cF$ and $\cF'$ whose weight is at most $r(\cN)$
(Theorem~\ref{uppernet}).
We conclude in Section~\ref{sect:conclude} with a brief discussion
of some future directions of research.

\section{Preliminaries}
\label{sect:prelim}

\noindent In this section we present some basic notation and definitions.\\

\noindent{\bf Graphs.}
   \teal{The graphs that we shall consider in this paper will be multi-graphs. 
   We denote such a graph $G$ by an ordered pair $G=(V,E)$, where $V=V(G)$ are the vertices of $G$ and
   $E=E(G)$ are the edges of $G$. No graph considered will contain a loop, but 
   it may contain a {\em multi-edge}, that is, a pair of
   vertices which are joined by more than one edge.
	In case there is a single edge in $G$ joining
	two vertices $u$ and $v$ we call it an {\em edge} and denote it by $\{u,v\}$.
    Note that if $E(G)$ contains only edges, then $E(G)$ is a set.} 
	An edge $e$ of a graph $G$ is called a {\em cut-edge} of $G$ if the deletion
	of $e$ disconnects $G$. If $e$ is a cut-edge that contains a 
	vertex of $G$ of degree one then we call $e$ a {\em trivial cut-edge}.  
	A {\em subdivision} of $G$ is a graph obtained from $G$ by replacing edges in $G$ by 
	paths containing at least one edge. If $G$ has two edges $\{u,v\}$, $\{v,w\}$, $u,v,w \in V(G)$ and $v$ has degree two \blue{and is not incident with two edges $\{u,v\}$ or two edges $\{v,w\}$},  
	the process of replacing these two edges by a single edge $\{u,w\}$ and removing $v$ is called 
	{\em suppressing} the vertex $v$. If $G$ has a multi-edge between two vertices 
	$u$ and $v$ in $V(G)$, the process of replacing the edges by a single edge $\{u,v\}$ is 
	called  {\em suppressing} the multi-edge.
    A {\em leaf} of $G$ is a vertex in $V(G)$ \blue{of degree at most one.}
    \blue{The set of leaves of $G$ is denoted by $L(G)$. Finally, we define $r(G)=|E|-(|V|-1)$ and refer to $r(G)$ as the {\it reticulation number of $G$}} which is also known as the cyclomatic number of $G$ \cite{berge01}.\\

\noindent {\bf Phylogenetic trees and networks.}
\label{sec:networks-trees-forests}
\blue{Throughout the paper, $X$ denotes} a finite set with $n=|X| \ge 1$, where
we think of $X$ as a set of species.

A {\em phylogenetic network} $\cN$ on $X$ is a simple, connected graph $(V,E)$, 
with leaf set $X \subseteq V$ and such that all non-leaf vertices have degree \blue{three} \cite{IK18}. 
Note that this type of graph is sometimes called a {\em binary} phylogenetic network.
Also note that if $X= \{x\}$, then we shall regard the graph
consisting of the single vertex $x$ as being a phylogenetic network, and will denote it by $x$.	
If $\cN$ is a tree, that is, it contains no cycles, then we call $\cN$ a {\em phylogenetic tree (on $X$)}.
Suppose \blue{that} $\cT$ is a phylogenetic tree on $X$ and $Y\subseteq X$ is a non-empty subset. 
We denote by $\cT(Y)$ the minimal subtree of $\cT$ that connects the
elements in $Y$.  Note that $\cT(Y)$ need not be a phylogenetic tree on $Y$ because
it might contain vertices that have degree two. We therefore denote the phylogenetic tree 
obtained by suppressing all vertices in $\cT(Y)$ of degree two by $\cT|Y$ and refer to this
as being the {\em restriction} of $\cT$ to $Y$. 

Suppose that $\cT$ is a phylogenetic tree and that $\cN$ is a phylogenetic network. 
We call $\cT$ a {\em pendant subtree} of $\cN$ if either $\cN$ equals $\cT$ or
there is a cut-edge $e$ in $\cN$ so that $\cT$ \blue{is equal to one of the two connected components obtained from $\cN$ by deleting $e$ and suppressing any resulting degree-two vertices.}
In the latter case, we shall also refer to $\cT$ as being a {\em proper} pendant subtree of $\cN$,
and we denote the cut-edge $e$ that gives rise to $\cT$ 
by $e_{\mathcal T,\mathcal N}$ or just $e_{\mathcal T}$ if $\cN$ is
clear from the context. If $\cT'$ is a phylogenetic tree, and  $\cT$ is a 
proper pendant subtree of $\cT'$, we let  $\cT'-\cT$ denote
the  phylogenetic tree on \blue{$X\setminus L(\cT)$} obtained by deleting $\cT$ and $e_{\cT}$ from $\cT'$ 
and suppressing the resulting degree-\blue{two} vertices (in case there are any).

An edge of $\cN$ that contains a leaf $x$ of $\cN$ is called a {\em pendant edge} in $\cN$ (so 
that, in particular, $x$ is a proper pendant subtree of $\cN$ and $e_x$ is the pendant edge of $x$).
In case $\cN$ has at least two leaves, a {\em cherry} of $\cN$ consists of two \blue{distinct} leaves $x$ and $y$ of $\cN$
such that either $\cN$ \blue{is equal to the edge $\{x,y\}$,} or $x$ and $y$ are adjacent to a common vertex
(so that, in particular, in the latter case the phylogenetic tree $\{x,y\}$ is a proper pendant subtree of $\cN$).
We shall denote a cherry consisting of leaves $x$ and $y$ by $(x,y)$, where 
the order of $x$ \blue{and} $y$ is unimportant.  We call $\cN$ {\em pendantless} if it contains no proper 
pendant subtree with at least two leaves (e.g. for the phylogenetic network $\cN$ 
in Fig.~\ref{fig:prelims}, since the subtree with leaf set $\{5,6\}$ is a cherry of $\cN$, it 
follows that $\cN$ is not pendantless).
We also need notation for two further special cases.
Suppose \blue{that} $\cT$ is a pendant subtree of $\cN$. If $\cT$ has leaf set $\{x,y,z\} \subseteq X$ and $\cT$ is a 
proper pendant subtree of $\cN$ such that $(x,y)$ is a cherry of $\cN$, then we denote $\cT$ 
 by $((x,y),z)$ (for brevity, if $\cT=\cN$, then we shall also denote \blue{$\cT$} by  $((x,y),z)$ although the choice of the 
 cherry $(x,y)$ is unimportant).
In addition, if $\cT$ has leaf set $\{x,y,z,w\}\subseteq X$ 
and $(x,y)$, $(z,w)$ are both cherries in $\cN$, then we denote $\cT$ by $((x,y),(z,w))$.\\

\newpage

\noindent {\bf Forests.}
A {\em forest} $\cF$ on $X$ is a set of phylogenetic trees whose collective leaf set equals $X$ \blue{and no two leaves have the same label}.
Abusing notation, we will consider a phylogenetic tree \blue{$\cT$} as also being a forest (i.e. the
singleton set that consists of \blue{$\cT$}).
A pair $(x,y)$ with $x,y\in X$ distinct is called a {\em cherry} of a forest $\cF$ if it is a cherry in
\blue{one} of the trees in $\cF$. 

For an edge $e$ in a phylogenetic tree $\cT$ on $X$
we denote by $\cT-e$ the forest obtained from $\cT$ by deleting $e$
and suppressing degree-\blue{two} vertices (if any) in the resulting trees.
If $\cF$ is a forest on $X$, $|X|\ge 2$, and $x\in X$, then we 
let $\cT_x$ denote the tree in $\cF$ that contains $x$ in its leaf set.
We denote by $\cF -x$ the forest obtained from $\cF$ by 
removing $\cT_x$ from $\cF$ in case $|L(\cT_x)|=1$ and replacing $\cT_x$
by $\cT_x-x$ otherwise. Also, for $e$ an edge
in $\cF$, we let $\cT_e$ denote the tree in $\cF$ with $e$ in its edge set, and  
denote by $\cF-e$ the forest obtained by replacing $\cT_e$ by $\cT_e-e$.
For example, referring to Fig.~\ref{fig:prelims} for $x=4$, the 
tree $\cT_x$ in $\cF'$ is $((2,4),(5,6))$ and $\cF'-4$ is the forest 
comprising the cherry \blue{with leaf set} $\{1,3\}$ and \blue{the three-leaf tree} $((5,6),2)$. \blue{Similarly, for $e$ being the pendant edge incident with $4$,} the tree $\cT_e$
in $\cF'$ is again $((2,4),(5,6))$ and $\cF-e$ is the 
forest comprising the cherry \blue{with leaf set} $\{1,3\}$, the isolated vertex 4, and \blue{the three-leaf tree} $((5,6),2)$.\\

\noindent {\bf Displaying phylogenetic trees and forests.}
Suppose \blue{that} $\cN$ is a phylogenetic network on $X$, and that $\cT$ 
is a phylogenetic tree on some subset $Y \subseteq X$. Then 
we say that $\cT$ is {\em displayed} by $\cN$ if $\cT$ can be obtained from  
a subtree $\cN[\cT]$ of $\cN$ by suppressing all vertices in $\cN[\cT]$ 
with degree two. We shall refer to $\cN[\cT]$ as 
 \blue{an} {\em image} of $\cT$ in $\cN$. 
 For example in Fig. ~\ref{fig:prelims}, for $\cT=((2,4),(5,6))$, \blue{a} 
 	subgraph $\cN[\cT]$ of $\cN$ with leaf set $\{2,4,5,6\}$ is indicated in bold.
Note that an image of $\cT$ in $\cN$ is isomorphic to a subdivision of $\cT$
and that $\cT$ could have several images in $\cN$.
Usually the image of $\cT$ in $\cN$ that we are considering is clear from the context, but in case we want 
to make this clearer we shall refer to it explicitly.
 
Suppose that $\cF$ is a forest on $X$. Then we say that $\cN$ {\em displays} $\cF$ if
every tree in $\cF$ is displayed by $\cN$, and  we can choose an image 
$\cN[\cT]$ for each tree $\cT$ in $\cF$ such that for any distinct trees $\cT, \cT' \in \cF$ the 
images $\cN[\cT]$ and $\cN[\cT']$ do not share a vertex. \blue{If $\cN$ displays $\cF$, then} we refer to
the set $\cN[\cF]=\{ \cN[\cT]\,:\, \cT\in\cF\}$ as an {\em image} of $\cF$ in $\cN$. 
	For example, considering again the \blue{forest $\cF'$} 
		and the phylogenetic network $\cN$ pictured in Fig.~\ref{fig:prelims},
	the subgraph $\cN[\cF']$ of $\cN$ is indicated in bold.
	
\section{Cherry picking sequences}
\label{sect:picking}

\begin{figure}[h!]
	\center
	\scalebox{0.97}{\input{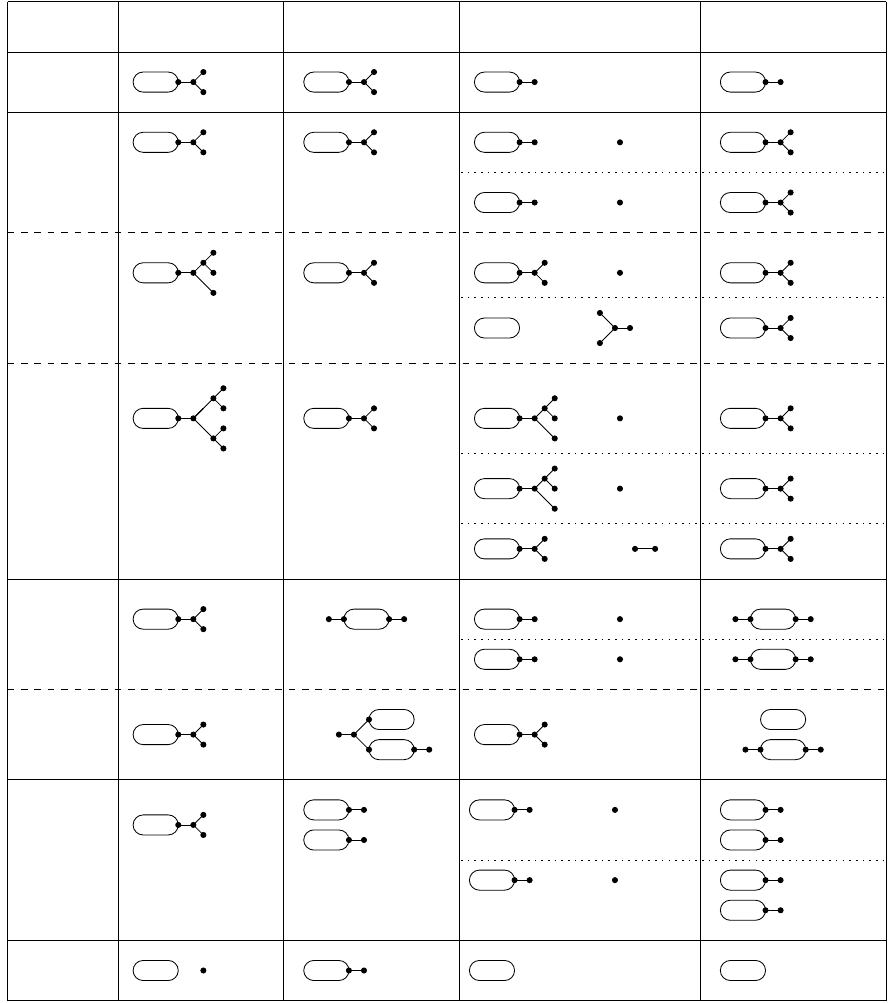_t}}
	\caption{\blue{The different cases as described in the definition of a cherry picking sequence. Ovals indicate subtrees. The roles of $\cF$ and $\cF'$ in (C2) and (C3) could also be reversed.}}
	\label{fig:cps-def}
\end{figure}

In this section, we define the concept of a cherry picking sequence for two forests
	having the same leaf set, and show that such a sequence
	always exists for any pair of forests. We begin by making a simple but important observation, whose proof is straight-forward and omitted.

\begin{observation}\label{note} 
If we are given two forests $\cF$ and $\cF'$ on $X$
and two \blue{distinct} elements $x,y\in X$ such that  $(x,y)$ is a cherry in $\cF$ then precisely one of  the following must hold: 
(i) $(x,y)$ is a cherry in $\cF'$; 
(ii) $(x,z)$ or $(y,z)$ is a cherry in $\cF'$ \blue{for} some $z \in \blue{X\setminus\{x,y\}}$,
(iii) $x$ and $y$ are in the same tree in $\cF'$, but 
neither $x$ nor $y$ is in a cherry of $\cF'$; or
(iv) $x$ and $y$ are in different trees in $\cF'$,
but neither $x$ nor $y$ is in a cherry of $\cF'$.
\end{observation}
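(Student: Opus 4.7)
The plan is to verify the observation by a straightforward case analysis based on two binary criteria: (a) whether at least one of $x, y$ lies in some cherry of $\cF'$, and (b) if neither does, whether $x$ and $y$ belong to the same tree of $\cF'$. Since the statement is simply a trichotomy/tetrachotomy, the proof amounts to checking exhaustiveness and mutual exclusivity; there is no real obstacle beyond careful bookkeeping.

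For exhaustiveness, I would first suppose that at least one of $x, y$, say $x$, lies in some cherry of $\cF'$. Since $x$ is a leaf, it has a unique neighbour in the tree of $\cF'$ containing it, so $x$ lies in a \emph{unique} cherry, of the form $(x,z)$. If $z = y$ we are in case (i); if $z \in X\setminus\{x,y\}$ we are in case (ii). The case where $y$ (rather than $x$) lies in a cherry is symmetric and again yields (i) or (ii). If instead neither $x$ nor $y$ lies in any cherry of $\cF'$, then depending on whether the two elements are contained in a common tree of $\cF'$ or in two distinct trees, we fall into case (iii) or case (iv) respectively. This covers every configuration.

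For mutual exclusivity, I would argue as follows. Cases (i) and (ii) are incompatible: if $(x,y)$ is a cherry of $\cF'$ then the unique neighbour of $x$ in $\cF'$ is shared with $y$, so no cherry of the form $(x,z)$ with $z\neq y$ can exist in $\cF'$, and the same reasoning rules out $(y,z)$ with $z\neq x$. Cases (iii) and (iv) each explicitly require that neither $x$ nor $y$ lies in a cherry, so they are incompatible with both (i) and (ii). Finally, (iii) and (iv) are incompatible by definition, as $x$ and $y$ cannot simultaneously lie in the same tree and in two different trees of $\cF'$. Combining exhaustiveness with mutual exclusivity gives the claim.
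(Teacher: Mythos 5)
The paper omits its proof of this observation (it is stated as ``straight-forward and omitted''), so there is nothing to compare against; your case analysis by the two binary criteria is the natural route, and your exhaustiveness argument is correct. However, one step in your mutual-exclusivity argument is false as stated: a leaf $x$ does \emph{not} always lie in a unique cherry of $\cF'$. If the component of $\cF'$ containing $x$ is a phylogenetic tree with exactly three leaves $x,y,z$ (a single internal vertex adjacent to all three), then under the paper's definition $(x,y)$, $(x,z)$ and $(y,z)$ are all cherries of that tree, so $x$ lies in two cherries. Your claim that ``the unique neighbour of $x$ in $\cF'$ is shared with $y$, so no cherry of the form $(x,z)$ with $z\neq y$ can exist'' therefore fails exactly in this situation: the unique neighbour of $x$ can simultaneously be adjacent to $y$ and to $z$, and then cases (i) and (ii) both hold. (This is the only way it can happen: if the tree has more than three leaves, the neighbour of $x$ has at most one other leaf neighbour, and your uniqueness argument is valid.)

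Strictly speaking this is as much a blemish in the statement as in your proof --- the ``precisely one'' of the observation is only correct if one either excludes this three-leaf-component configuration or reads the four cases with (ii) implicitly meaning ``(i) fails and $(x,z)$ or $(y,z)$ is a cherry for some $z\notin\{x,y\}$.'' For the way the observation is used later in the paper (to justify that one of the reductions (C1)--(C3) is always applicable), only exhaustiveness matters, and that part of your argument is sound. But if you want to prove the disjointness of (i) and (ii) as literally claimed, you must either adopt the prioritized reading of the cases or note the three-leaf exception explicitly; the uniqueness of the cherry containing a given leaf cannot be invoked without qualification.
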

%

Exploiting this observation, we shall now define a 
	cherry picking sequence for  two forests $\cF$ and $\cF'$ on the same leaf set $X$ as follows: 
	We call a sequence $\sigma = (x_1,x_2,\dots,x_m)$, \blue{$m\geq |X|\geq 1$}, of elements in $X$ 
	a {\em cherry picking sequence for $\cF$ and $\cF'$ (of length $m$)} if 
	there are forests $\cF[i]$ and $\cF'[i]$, $1\le i\le m$, such that $\cF[1]=\cF$, $\cF'[1]=\cF'$, 
	$\cF[m] = \cF'[m] =\{x_m\}$ and, for each $1 \le i \le m-1$, precisely one of the  
	following cases holds. \blue{The different cases are illustrated in Fig.~\ref{fig:cps-def} and an example of a cherry picking sequence is presented in Fig.~\ref{fig:cherry-pick-ex}}.

\begin{enumerate}[(C1)]
\item $(x_i,y)$ is a cherry in both $\cF[i]$ and $\cF'[i]$, 
and $\cF[i+1] =\cF[i]-x_i$ and $\cF'[i+1] =\cF'[i] -x_i$;
\item$(x_i,y)$ is a cherry in $\cF[i]$ and one of the following three cases holds:
\begin{enumerate}[(a)]
\item $(x_i,z)$ is a 
cherry in $\cF'[i]$ with $y \neq z$, $\cF'[i]=\cF'[i+1]$ and either 
\begin{enumerate}[(i)]
\item $\cF[i+1]$ equals $\cF[i]-e_{x_i}$ or $\cF[i]-e_{y}$; or
\item $((x_i,y),p)$ is a proper pendant subtree of a tree in $\mathcal F[i]$, $p \in X$, 
and $\cF[i+1]=\cF[i]-e_{\mathcal S}$ for some $\mathcal S \in \{p,((x_i,y),p)\}$; or
\item $((x_i,y),(p,q))$ is a proper pendant subtree of a tree in $\mathcal F[i]$, $p \neq q \in X$, and  
$\cF[i+1]=\cF[i]-e_{\mathcal S}$ for some \blue{$\mathcal S \in \{p,q,(p,q)\}$};
\end{enumerate}
\item $x_i$ and $y$ are both leaves in some tree $\cT' \in \cF'[i]$ 
and neither $x_i$ nor $y$ is contained in a cherry in $\cT'$, and either
\begin{enumerate}[(i)]
\item $\cF'[i+1]=\cF'[i]$ and $\cF[i+1]=\cF[i]-e_{x_i}$ or $\cF[i+1]=\cF[i]-e_{y}$; or 
\item for $e$ the edge in $\cT'$ that contains the vertex adjacent to $x_i$ and 
is not on the path 	between $x_i$ and $y$,
$\cF'[i+1]= \cF'[i] -e$ and $\cF[i+1]=\cF[i]$;
\end{enumerate}
\item $x_i$ and $y$ are contained in different trees of $\cF'[i]$, neither
of them is an isolated vertex or contained in a cherry of $\cF'[i]$,  $\cF'[i]=\cF'[i+1]$, and
		$\cF[i+1]=\cF[i]-e_{x_i}$ or $\cF[i+1]=\cF[i]-e_y$;
\end{enumerate}
or the same holds with the roles of $\cF$ and $\cF'$ reversed.

\item $x_i$ is a component of $\cF[i]$, 
		$\cF[i+1] = \cF[i] - x_i$, and $\cF'[i+1]=\blue{\cF'[i]} - x_i$,
		or the same holds with the roles of $\cF$ and $\cF'$ reversed.\\
\end{enumerate}	

 	\begin{figure}[t!]
		\center
		\scalebox{0.73}{\input{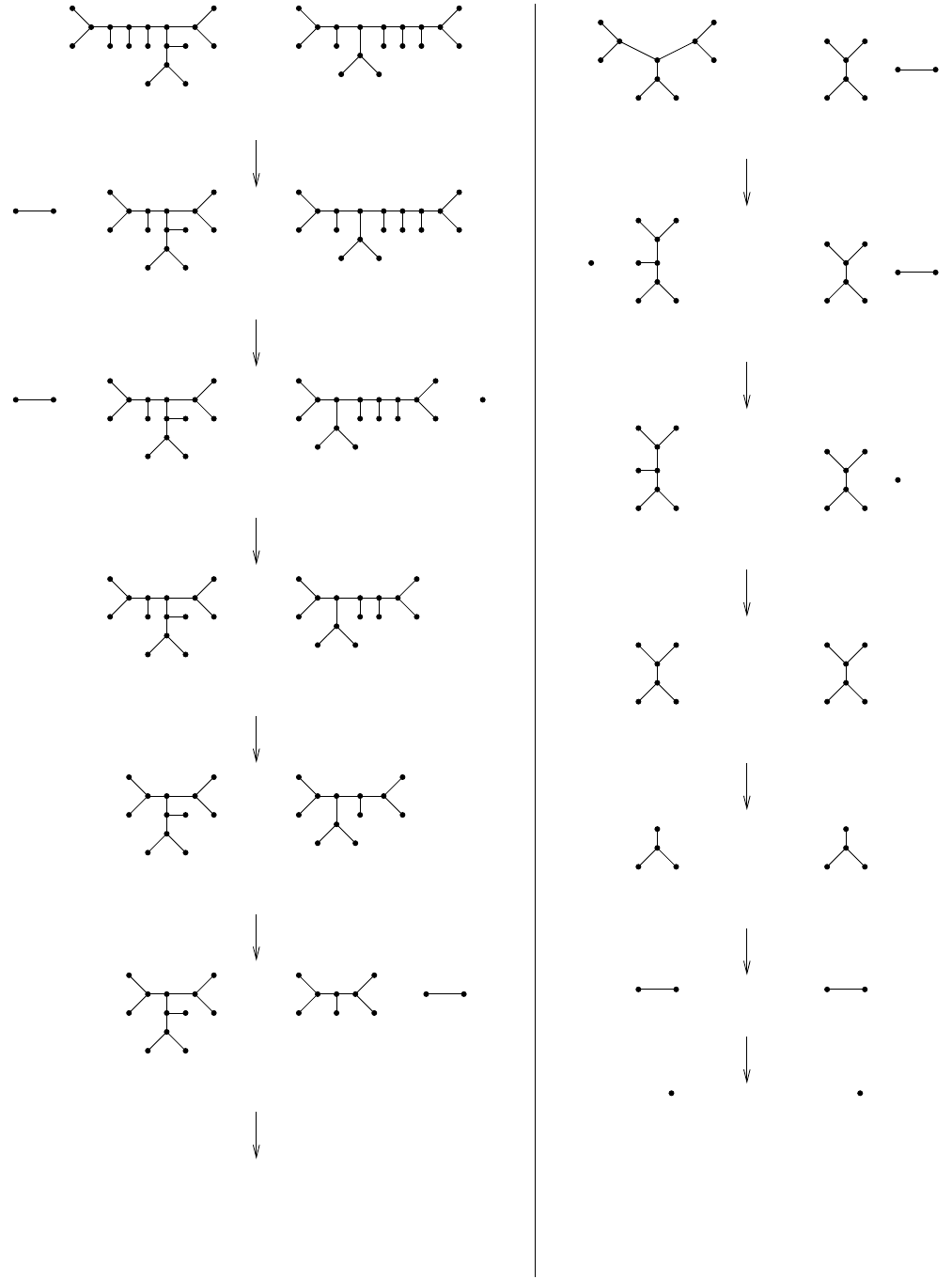_t}}
		\caption{For the two depicted forests
			$\cF$ and $\cF'$ the sequence \teal{$(8,1,9,10,8,8,2,6,7, 1,1,3,4,5,6,2)$} 
            is a cherry picking sequence for 
			$\cF$ and $\cF'$. The reductions applied within the sequence are indicated \teal{next to} the arrows. }
		\label{fig:cherry-pick-ex}
	\end{figure}

\noindent Note that in cases (C1) and (C3), $\cF[i+1]$ and $\cF'[i+1]$ both have one leaf less than $\cF[i]$ and $\cF'[i]$, \blue{respectively,} whereas in all sub-cases of (C2)  
		one of $\cF[i+1]$ and $\cF'[i+1]$ is obtained by removing an edge from $\cF[i]$ or $\cF'[i]$.
		For simplicity, we shall call the removal of a leaf or an edge as described in each of the cases in (C1)--(C3) a {\em reduction},
		or, if we want to be more specific, we may, for example, say that 
		we use a (C2)(a)(ii)-reduction (applied to $x_i$).

\blue{Let $\sigma=(x_1,\dots,x_m)$ be a
cherry picking sequence for two forests $\cF$ and $\cF'$ with \blue{$m\geq 2$}. For each $1\le i \le m-1$, we set $c(x_i)=0$
if one of the reductions in (C1) and (C3) is applied \blue{to $x_i$} and, otherwise, $c(x_i)=1$}. In addition, we  define 
the  {\em weight} of $\sigma$  by $w(\sigma)=\sum_{i=1}^{m-1} c(x_i)$ if $m\geq 2$ and $0$ if $m=1$. 	
To illustrate these definitions, let  $X=\{1,\ldots, 6\}$
and consider the two forests $\cF$ and  $\cF'$ on  $X$ in Fig.~\ref{fig:prelims}.
Then \teal{$(8,1,9,10,8,8,2,6,7, 1,1,3,4,5,6,2)$} 
gives a cherry picking  sequence for $\cF$ and $\cF'$ with weight \teal{6}, as can be verified 
by looking at the reductions depicted in Fig.~\ref{fig:cherry-pick-ex}.
	
We now show that a cherry picking sequence always exists for any pair of forests with the same leaf set.

\newpage

\begin{proposition}\label{CPexists}
	Suppose that $\cF$ and $\cF'$ are forests on $X$.
	Then there exists a cherry picking sequence $\sigma$ 
	for $\cF$ and $\cF'$ of length $m$ \blue{for} some \blue{$m \ge  |X|$}.	
\end{proposition}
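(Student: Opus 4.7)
The plan is to prove the proposition by strong induction on the measure $\mu(\cF,\cF') = |X| + |E(\cF)| + |E(\cF')|$, where $|E(\cdot)|$ denotes the total number of edges in the forest. The base case $|X|=1$ is immediate: we have $\cF = \cF' = \{x\}$ for a single vertex $x$, and $\sigma = (x)$ of length $m = 1$ trivially satisfies the definition with $\cF[1] = \cF'[1] = \{x\}$.

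For the inductive step, with $|X|\geq 2$, the first task is to identify a reduction that is always available. I split into two cases. If at least one of $\cF, \cF'$ contains a single-vertex component $\{x\}$, apply (C3) with $x_1=x$; the leaf set strictly shrinks by one, so $\mu$ decreases, and the inductive hypothesis produces a cherry picking sequence $(x_2,\dots,x_m)$ for $\cF[2] = \cF - x$ and $\cF'[2] = \cF' - x$. Prepending $x_1$ gives the desired sequence.

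Otherwise, every tree in $\cF$ (and in $\cF'$) has at least two leaves and hence contains a cherry; pick any cherry $(x,y)$ of $\cF$. Observation~\ref{note} partitions the possibilities for $\cF'$ into four cases, which I translate into reductions as follows: case~(i) yields (C1) with $x_1=x$; case~(ii), after possibly swapping the roles of $x$ and $y$ so that the overlapping cherry in $\cF'$ has the form $(x,z)$, yields (C2)(a)(i) with $x_1=x$ and $\cF[2] = \cF - e_x$; case~(iii) yields (C2)(b)(i) with $x_1=x$ and $\cF[2] = \cF - e_x$; and case~(iv) yields (C2)(c) with $x_1=x$ and $\cF[2] = \cF - e_x$, where the requirement that $x,y$ are not isolated in $\cF'$ follows from the case hypothesis. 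In each instance, $\mu(\cF[2],\cF'[2]) < \mu(\cF,\cF')$, so the inductive hypothesis supplies a sequence of length $m-1$ for $(\cF[2], \cF'[2])$ that we prepend $x_1$ to.

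The length inequality $m \ge |X|$ is then an easy bookkeeping check: after a (C1) or (C3) step the inductive call returns a sequence of length at least $|X|-1$ on the leaf set $X\setminus\{x_1\}$, so the prepended sequence has length at least $|X|$; after a (C2) step the inductive call is on the same leaf set $X$ and returns a sequence of length at least $|X|$, so the prepended sequence has length at least $|X|+1$. The only genuine subtlety I would treat carefully is verifying that each chosen reduction satisfies its stated preconditions — in particular that $e_x$ exists as a pendant edge (immediate since $(x,y)$ is a cherry of $\cF$), and that after deleting $e_x$ and suppressing any resulting degree-two vertex the leaf set of $\cF[2]$ is still $X$ in the (C2) cases — after which the strict decrease of $\mu$ makes termination automatic.
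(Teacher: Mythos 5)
Your proof is correct. It follows the same basic strategy as the paper's --- use Observation~\ref{note} to show that one of the reductions (C1), (C2)(a)(i), (C2)(b)(i), (C2)(c), (C3) is always applicable, then recurse --- but the induction is organized differently. The paper inducts on $|X|$ alone; since a (C2)-reduction does not shrink the leaf set, the paper cannot recurse immediately after applying one, and instead observes that $\cF[2]=\cF-e_x$ now contains $x$ as an isolated component, so it chains the (C2)-step with a (C3)-step (producing a sequence beginning $(x,x,\dots)$) before invoking the inductive hypothesis on $X\setminus\{x\}$. You instead induct on the combined measure $|X|+|E(\cF)|+|E(\cF')|$, which strictly decreases under every single reduction (a (C2)-step removes at least one edge while fixing $X$), so you may recurse after one step. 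Both arguments are sound: yours is slightly cleaner in that each inductive step performs exactly one reduction, while the paper's avoids introducing an auxiliary measure at the cost of a two-step move. Your remaining checks --- the bookkeeping for $m\ge|X|$, the swap of $x$ and $y$ in case~(ii) so that the cherry of $\cF'$ has the form $(x,z)$, and the observation that case~(iv) of Observation~\ref{note} supplies the non-isolation hypothesis of (C2)(c) once single-vertex components have been disposed of --- are exactly the points that need verifying, and they all go through.
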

\begin{proof}
Suppose that $\cF$ and $\cF'$ are two forests on $X$, $n=|X| \ge 1$. 
We use induction on $n$. If $n=1$, then we can assume $X=\{x\}$, and 
so $\sigma=(x)$ is a cherry picking sequence
for $\cF$ and $\cF'$ (since $\cF = \cF' = \{x\}$).

So suppose that $n >1$, and that for any two forests $\cF_1$ 
and $\cF_2$ on a set $Y$ with $1 \le |Y| \le n-1$,
there exists a cherry picking sequence for $\cF_1$ and $\cF_2$ 
of length at least $ |Y|$.

First, suppose that there is no cherry in either $\cF$ or $\cF'$. Then the 
edge sets of $\cF$ and $\cF'$ must both be empty. Pick any $x \in X$
and apply a (C3)-reduction
to obtain forests $\cF_1=\cF-x$ and $\cF_2=\cF'-x$ on \blue{$X \setminus \{x\}$}.
By induction, there is a cherry picking sequence $(x_1,\dots,x_p)$ 
for  $\cF_1$ and $\cF_2$ with $p \ge n-1$. Hence $(x,x_1,\dots,x_p)$
is a cherry picking sequence for $\cF$ and $\cF'$ with length at least $n$. 

Now suppose that there exist $x,y\in X$ such that $(x,y)$
is a cherry in one of the forests, say $\cF$. If $x$ or $y$ is a component 
of $\cF'$ then, as in the previous case, applying a (C3)-reduction  
and our induction hypothesis yields
a cherry picking sequence for $\cF$ and $\cF'$. So assume that neither $x$ nor $y$ is a component of $\cF'$.
Then, by Observation~\ref{note}, setting $\cF[i]= \cF$ and $\cF'[i]= \cF'$, 
without loss of generality we must be in the situation given in (C1)
or one of the cases given in (C2) in the definition of a cherry picking sequence with $x=x_i$.	

In case (C1), we can apply the reduction $\cF_1=\cF[i]-x$ and $\cF_2=\cF'[i]-x$ to $\cF[i]$ and $\cF'[i]$.
By induction, there is a cherry picking sequence $(x_1,\dots,x_p)$ 
for  $\cF_1$ and  $\cF_2$ with $p \ge n-1$. Hence $(x,x_1,\dots,x_p)$
is a cherry picking sequence for $\cF$ and $\cF'$ with length $p+1$ which is at least $n$.

For case (C2), since one of the cases (a), (b) or (c) must hold,
we can apply the reduction $\cF_1=\cF[i]-e_x$ and $\cF_2=\cF'[i]$
(which occurs in cases (a)(i), (b)(i) and (c)). 
This implies that $x$ is a component in $\cF_1$.
So, we can then apply a (C3)-reduction to $\cF_1$ and $\cF_2$ with $x_i=x$ which 
results in two forests $\cF_1''$ and $\cF_2''$ on \blue{$X\setminus\{x\}$}. 
By induction, there is a cherry picking sequence $(x_1,\dots,x_p)$ 
for  $\cF_1''$ and  $\cF_2''$ with $p \ge n-1$. It now follows that $(x,x,x_1,\dots,x_p)$
is a cherry picking sequence for $\cF$ and $\cF'$ with length $p+2 > n$.
\end{proof}

Note that in the proof of Theorem~\ref{CPexists}, we did not need to 
apply either a (C2)(a)(ii), a (C2)(a)(iii), or a (C2)(b)(ii)-reduction. However, as
we shall see below, we require these additional reductions to ensure that
we obtain a cherry picking sequence with the desired weight in the proof of Theorem~\ref{uppernet}.

\section{Bounding the hybrid number from above using cherry picking sequences}
\label{sect:bound1}

The {\em hybrid number} $h(\cF,\cF')$ of two forests $\cF$ and $\cF'$ on $X$ is defined to be 
$$
h(\cF,\cF') = \min\{ r(\cN) \,:\, \cN \mbox{ displays } \cF \mbox{ and } \cF'\},
$$
where $r(\cN)$ denotes the reticulation number of $\cN$ \blue{defined in Section~\ref{sect:prelim}}.
\blue{By Proposition~\ref{CPexists} and Theorem~\ref{uppercherry} (below), note 
that there always exists a phylogenetic network \blue{on $X$} that displays $\cF$ and $\cF'$.}
In this and the next two sections, we shall focus on proving the main result of this paper:

\begin{theorem}\label{main}
Suppose that $\cF$ and $\cF'$ are forests on $X$. Then 
$$
h(\cF,\cF') = \min\{ w(\sigma) \,:\, \sigma \mbox{ is a cherry picking sequence for } \cF \mbox{ and } \cF'\}.
$$
\end{theorem}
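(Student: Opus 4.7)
The plan is to establish the equality by proving the two inequalities separately, corresponding to the two theorems announced in the introduction (Theorem~\ref{uppercherry} in Section~\ref{sect:bound1} and Theorem~\ref{uppernet} in Section~\ref{sect:bound2}). Once we have, for every cherry picking sequence $\sigma$, a network $\cN$ with $r(\cN)\le w(\sigma)$, and for every network $\cN$ displaying $\cF,\cF'$, a sequence $\sigma$ with $w(\sigma)\le r(\cN)$, the identity in Theorem~\ref{main} follows by taking minima on both sides. Existence of at least one such pair is guaranteed by Proposition~\ref{CPexists} together with the first inequality.

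For the inequality $h(\cF,\cF')\le \min_\sigma w(\sigma)$, I would induct on the length $m$ of a cherry picking sequence $\sigma=(x_1,\dots,x_m)$ for $\cF$ and $\cF'$. The base case $m=1$ is immediate: $\cF=\cF'=\{x_1\}$ and the single-vertex network $x_1$ has reticulation number $0=w(\sigma)$. For the inductive step, the tail $(x_2,\dots,x_m)$ is a cherry picking sequence for $\cF[2]$ and $\cF'[2]$, so we obtain a network $\cN_2$ displaying them with $r(\cN_2)\le w(\sigma)-c(x_1)$. We then \emph{undo} the reduction that produced $(\cF[2],\cF'[2])$ from $(\cF[1],\cF'[1])$: in cases (C1) and (C3) we simply re-attach $x_1$ as a pendant leaf (possibly as a new component), so $r$ is unchanged and $c(x_1)=0$; in each sub-case of (C2) we add one new edge, either reconnecting the two halves of a split tree or grafting $x_1$ onto an existing edge, creating exactly one new cycle and matching the contribution $c(x_1)=1$ to the weight. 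Careful bookkeeping shows that in every (C2) sub-case the extended network still displays both of $\cF[1]$ and $\cF'[1]$, with $r(\cN)=r(\cN_2)+1$.

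For the inequality $\min_\sigma w(\sigma)\le h(\cF,\cF')$, I would induct on $|X|$. Given an optimal network $\cN$ displaying $\cF$ and $\cF'$, the goal is to find some reduction applicable to the current pair $(\cF,\cF')$ such that the resulting pair is still displayed by a network whose reticulation number has not increased (for (C1) and (C3)) or has strictly decreased (for each (C2) sub-case). Split on whether $\cN$ has a non-trivial proper pendant subtree: if so, its leaves correspond to a cherry that is present in both forests (after peeling off isomorphic pendant portions), and one can invoke case (C1) or (C3) to pick $x_1$ without consuming reticulations, so the induction hypothesis applied to a network of the same $r$-value on a smaller leaf set closes the step. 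If $\cN$ is pendantless, then any cherry $(x,y)$ of $\cF$ or $\cF'$ must be created inside $\cN$ by an edge that lies on a cycle; the technical lemmas of Section~\ref{sect:tech} identify precisely which edge of $\cN$ to delete so that the resulting graph still displays the reduced forest pair coming from the appropriate (C2) sub-case, and this deletion decreases $r$ by exactly one, matching the $c(x_i)=1$ charge.

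The main obstacle is the second direction, specifically matching the structure of $\cN$ around a candidate cherry $(x,y)$ with one of the finely enumerated sub-cases (a)(i)--(iii), (b)(i)--(ii), (c) of (C2). This is why the definition of a cherry picking sequence has to allow splitting a tree of the forest along a non-pendant edge: when $(x,y)$ is a cherry in $\cF$ but in $\cF'$ the leaves $x$ and $y$ are far apart or in different components, the only way to simultaneously simplify $\cN$ \emph{and} both forests is to legitimately break $\cF$ or $\cF'$ at a carefully chosen edge, and the four technical lemmas in Section~\ref{sect:tech} are precisely what guarantee that such an edge in $\cN$ exists and that the resulting subgraph still displays both reduced forests. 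Once these lemmas are in hand, the inductive step in the proof of Theorem~\ref{uppernet} becomes a methodical case analysis driven by Observation~\ref{note} applied to the current cherry.
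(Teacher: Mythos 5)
Your overall decomposition into the two inequalities via Theorems~\ref{uppercherry} and~\ref{uppernet}, together with Proposition~\ref{CPexists} for existence, is exactly the paper's route. Your argument for $h(\cF,\cF')\le\min_\sigma w(\sigma)$ is sound: inducting on the length $m$ rather than on $w(\sigma)$ as the paper does makes no essential difference, and the step constructions you describe (re-attaching a pendant leaf for (C1)/(C3); adding one edge to reconnect the two pieces created by an edge deletion for (C2), which raises $r$ by exactly one) are precisely the content of the paper's Lemmas~\ref{reviewer1} and~\ref{reviewer2}.

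The gap is in your induction scheme for the reverse inequality. A (C2)-reduction does not change the leaf set $X$: it only deletes an edge from one of the forests (the split-off piece keeps all of its labelled leaves, and $\cF[i]-e_{x_i}$ merely turns $x_i$ into an isolated component on the same ground set). Consequently, after you delete the corresponding edge of $\cN$ and pass to a network of reticulation number $r(\cN)-1$ displaying the reduced pair, you are facing an instance with the \emph{same} $|X|$, and an induction on $|X|$ cannot be invoked to close the pendantless case. This is precisely why the paper inducts on $r(\cN)$ instead: the (C2) steps strictly decrease $r$, so the induction hypothesis applies there, while the (C1)/(C3) steps --- which preserve $r$ but shrink $X$ --- are absorbed into a preprocessing phase inside each induction step (the reduction to property (P), which terminates because $X$ is finite). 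Your scheme can be repaired by switching to induction on $r(\cN)$, or on a lexicographic pair such as $(r(\cN),|X|)$, but as written the inductive step does not go through. A smaller point: in the pendant-subtree case, a cherry of $\cN$ need not be ``present in both forests'' --- one of its leaves may be an isolated component of $\cF$ or $\cF'$ --- so you need the dichotomy of the paper's Lemma~\ref{reviewer3} (shared cherry, or isolated component) to justify that a (C1)- or (C3)-reduction is always available; your hedge ``or (C3)'' suggests you see this, but it deserves an explicit argument.
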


\noindent To prove Theorem~\ref{main}, first note that the right-hand side of
the equality in Theorem~\ref{main} exists by  Proposition~\ref{CPexists}. In 
this section, we shall prove a result (Theorem~\ref{uppercherry} below)
	from which it directly follows that the hybrid number for two forests can be no larger 
	than the weight of an optimal cherry picking sequence for \blue{$\cF$ and $\cF'$}.
	Then, after proving \blue{some} supporting lemmas in the next section, in Section~\ref{sect:bound2}
	we shall prove a result
	(Theorem~\ref{uppernet}), from which it directly follows that the weight of an optimal cherry picking sequence for
	two forests is no larger than the hybrid number for the forests. The proof of Theorem~\ref{main} then follows immediately.

Before proceeding, denoting by $d_{TBR}(\cT,\cT')$  the TBR distance  between
two phylogenetic trees $\cT$ and $\cT'$ on $X$ \cite{allen01,SS03},  we note that as an immediate corollary of Theorem~\ref{main} and
\cite[Theorem 3]{IK18} we obtain:

\begin{theorem} \label{theo:tbr}
	Suppose that $\cT$ and $\cT'$ are phylogenetic trees in $X$. Then 
	$$
	d_{TBR}(\cT,\cT') = \min\{ w(\sigma) \,:\, \sigma \mbox{ is a cherry picking sequence for } \cT
	\mbox{ and } \cT'\}.
	$$
\end{theorem}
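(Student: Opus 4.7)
The plan is to obtain Theorem~\ref{theo:tbr} as an immediate corollary, by chaining Theorem~\ref{main} with Theorem~3 of~\cite{IK18}. The first step is to observe that since each phylogenetic tree $\cT$ on $X$ is, by the convention stated in Section~\ref{sec:networks-trees-forests}, also a (singleton) forest on $X$, Theorem~\ref{main} applies directly to the pair $\cT,\cT'$ and yields
\[
h(\cT,\cT') \;=\; \min\{\,w(\sigma)\,:\,\sigma\text{ is a cherry picking sequence for }\cT\text{ and }\cT'\,\}.
\]

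The second step is a brief sanity check that the quantity $h(\cT,\cT')$ appearing on the left, as defined in Section~\ref{sect:bound1} via the forest displaying relation, coincides with the hybrid number $h(\cP)$ for $\cP=\{\cT,\cT'\}$ from the introduction. This is true essentially by definition: an unrooted phylogenetic network $\cN$ on $X$ displays the (singleton) forest $\cT$ in the sense of Section~\ref{sec:networks-trees-forests} exactly when it displays the phylogenetic tree $\cT$ in the sense of the introduction, so both minima are taken over the same class of networks and evaluate the same quantity $r(\cN)$.

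The final step is to invoke \cite[Theorem~3]{IK18}, which states that $h(\{\cT,\cT'\})=d_{TBR}(\cT,\cT')$ for any two phylogenetic trees on the same leaf set. Combining this identity with the equality above gives
\[
d_{TBR}(\cT,\cT') \;=\; h(\cT,\cT') \;=\; \min\{\,w(\sigma)\,:\,\sigma\text{ is a cherry picking sequence for }\cT\text{ and }\cT'\,\},
\]
which is precisely the statement of Theorem~\ref{theo:tbr}. There is no real obstacle here, since all the heavy lifting is done by Theorem~\ref{main} (whose proof occupies the subsequent sections) and by the cited result from~\cite{IK18}; the only thing to be careful about is the mild notational point that $h(\cT,\cT')$ in our forest-based definition agrees with the classical $h(\cP)$ for $\cP=\{\cT,\cT'\}$.
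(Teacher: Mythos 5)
Your proposal is correct and is exactly the paper's argument: the authors also obtain Theorem~\ref{theo:tbr} as an immediate corollary of Theorem~\ref{main} combined with \cite[Theorem 3]{IK18}, treating each tree as a singleton forest. Your additional sanity check that the forest-based $h(\cT,\cT')$ agrees with the classical $h(\cP)$ for $\cP=\{\cT,\cT'\}$ is a reasonable observation that the paper leaves implicit.
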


\blue{To prove Theorem~\ref{uppercherry} we will 
	use the following two lemmas.}

\begin{lemma}\label{reviewer1}
	\blue{Suppose that $\cF$ and $\cF'$ are forests on $X$, $|X|\ge 2$, \teal{such}
		that either (i) both $\cF$ and $\cF'$ contain a cherry $(x,y)$, $x,y \in X$, 
		or (ii) $\cF$ contains a component $x$, $x \in X$.
		If $\cN'$ is a phylogenetic network 
  \blue{on $X\backslash \{x\}$} that displays
		$\cF-x$ and $\cF'-x$, then there is a phylogenetic
		network $\cN$ \blue{on $X$} with $r(\cN)=r(\cN')$ that displays $\cF$ and $\cF'$.}
\end{lemma}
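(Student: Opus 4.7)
The plan is to construct $\cN$ from $\cN'$ by a single grafting operation: choose a suitable edge $e=\{a,b\}$ of $\cN'$, subdivide it by a new vertex $v^{*}$, and attach $x$ as a new leaf via the pendant edge $\{v^{*},x\}$. This adds exactly two vertices and two edges, so $r(\cN)=r(\cN')$; moreover, $v^{*}$ has degree three in $\cN$ and $x$ has degree one, while every other vertex retains its degree from $\cN'$, so $\cN$ is a phylogenetic network on $X$. (In the degenerate situation where $\cN'$ is a single vertex $z$, we simply take $\cN$ to be the edge $\{x,z\}$.)

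For case (i), choose $e$ to be the pendant edge $\{y,w\}$ of $y$ in $\cN'$, so that $(x,y)$ becomes a cherry of $\cN$. For the tree $\cT \in \cF$ that contains the cherry $(x,y)$, the image $\cN'[\cT-x]$ either equals the isolated vertex $y$ (when $\cT$ is itself the cherry $(x,y)$) or uses $\{y,w\}$ as the first edge of the path representing $y$'s pendant edge in $\cT-x$. In either situation, extending $\cN'[\cT-x]$ by $v^{*}$, $x$, and the appropriate new edges at $v^{*}$ yields a subgraph of $\cN$ that suppresses to $\cT$, which we take as $\cN[\cT]$. The analogous construction using $\cT'\in\cF'$ produces $\cN[\cT']$. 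Every other tree in $\cF$ or in $\cF'$ contains neither $x$ nor $y$, so its image transfers unchanged from $\cN'$ to $\cN$; thus the images in each forest remain pairwise vertex-disjoint, and $\cN$ displays both $\cF$ and $\cF'$.

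For case (ii), the singleton tree $\{x\}$ is taken as the image in $\cN$ of the component $\{x\}$ of $\cF$. If $x$ is also a component of $\cF'$, I may choose $e$ to be any pendant edge of $\cN'$; the images of all trees in $\cF-x$ and $\cF'-x$ transfer to $\cN$, with $v^{*}$ appearing as a degree-two vertex in whichever (at most one) image in each forest used $e$. If instead $x$ is a leaf of some tree $\cT'_{x}\in\cF'$ with $|L(\cT'_{x})|\ge 2$, I select $e$ to lie on the path in $\cN'[\cT'_{x}-x]$ that represents the edge of $\cT'_{x}-x$ created by suppressing the former neighbor of $x$ in $\cT'_{x}$; extending $\cN'[\cT'_{x}-x]$ by $v^{*}$, $x$, and $\{v^{*},x\}$ then gives an image $\cN[\cT'_{x}]$ of $\cT'_{x}$ in $\cN$.

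The one step requiring care is the verification that, in each extended image, the resulting subgraph really suppresses to the claimed tree; this holds because the grafting is the inverse of the edge-suppression that produced $\cT-x$ from $\cT$ (respectively $\cT'_{x}-x$ from $\cT'_{x}$). All other checks — simplicity, connectedness, the degree-three condition for internal vertices, vertex-disjointness of images within each forest, and the reticulation-number bookkeeping — are routine.
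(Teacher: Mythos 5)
Your construction---subdivide a well-chosen edge of $\cN'$ by a new vertex $v^{*}$ and graft $x$ there---is exactly the paper's proof, and your case analysis, image-transfer argument, and reticulation-number bookkeeping are all correct. The one slip is in case (ii) when $\cT'_{x}$ is an isolated cherry $\{x,y\}$: then $\cT'_{x}-x$ is the single vertex $y$, so there is no ``edge of $\cT'_{x}-x$ created by suppressing the former neighbour of $x$'' and your edge-selection rule is undefined in that subcase; the fix is the same move you already use in case (i), namely grafting $x$ into the pendant edge of $y$ in $\cN'$.
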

\begin{proof} 
\blue{We obtain a phylogenetic network $\cN$ from $\cN'$ as follows.
For (i) we insert a new pendant edge containing
$x$ into the pendant edge in $\cN'$ containing $y$ to obtain $\cN$.} 

For (ii), suppose that $x$ is \blue{a} leaf in some tree $\cT'$ in $\cF'$. 
If $\cT'$ is equal to $x$, then we  insert a new pendant 
edge containing $x$ into any edge of $\cN'$
to \blue{obtain $\cN$}, and 
if $\cT'$ is the cherry $(x,y)$, $y\in X$, then we
insert a new pendant edge containing
$x$ into the pendant edge in $\cN'$ containing $y$ to \blue{obtain $\cN$}. Now, suppose $|L(\cT')|\geq 3$. Then there exists a vertex $u$ in $\cT'$ 
that is adjacent with $x$ and two further such edges in $\cT'$.
Let $e$ be the edge in $\cT'-x$ that 
results from removing the pendant edge containing $x$ from $\cT'$ \blue{and}
suppressing $u$. 
\blue{We then} insert a pendant edge with leaf $x$
anywhere into an edge contained in the image of $e$ in $\cN'$
to \blue{obtain $\cN$}.

\blue{In all of these cases, clearly $\cN$ displays $\cF$ and $\cF'$, and $r(\cN)=r(\cN')$.}
\end{proof}

\begin{lemma}\label{reviewer2}
\blue{Suppose that $\cF$ and $\cF'$ are forests on $X$, $|X| \ge 2$, and that $e$ is an 
edge in $\cF'$. If $\cN'$ is a phylogenetic network \blue{on $X$} that displays
$\cF$ and $\cF'-e$, then there is a phylogenetic
network $\cN$ \blue{on $X$} that displays $\cF$ and $\cF'$ with $r(\cN)=r(\cN')+1$.}
\end{lemma}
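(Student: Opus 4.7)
The plan is to build $\cN$ from $\cN'$ by a single controlled local insertion that restores the missing edge $e$ without disturbing any existing tree image. Let $\cT_e$ denote the tree of $\cF'$ containing $e=\{a,b\}$, and observe that $\cT_e-e$ splits into two pieces $\cA$ and $\cB$ (the sides of $a$ and $b$ respectively); each is either a phylogenetic tree on at least two leaves or a single leaf, the latter occurring precisely when the corresponding endpoint of $e$ is a leaf of $\cT_e$. Because $\cN'$ displays $\cF'-e$, there are vertex-disjoint images $\cN'[\cA]$ and $\cN'[\cB]$ inside $\cN'$.

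I would next single out two target edges of $\cN'$ on which to hang the new edge. If $a$ is internal in $\cT_e$ then its two neighbours $a_1,a_2\ne b$ are joined by a single edge $\{a_1,a_2\}$ in $\cA$ (obtained by suppressing $a$), and in $\cN'[\cA]$ this edge corresponds to a path whose interior vertices are degree-two in that image; let $e_\cA$ be any edge of this path. If $a$ is a leaf of $\cT_e$ (so $\cA=\{a\}$), let $e_\cA$ be the pendant edge of $a$ in $\cN'$. Choose $e_\cB$ analogously. Then obtain $\cN$ from $\cN'$ by subdividing $e_\cA$ and $e_\cB$ with two new vertices $w_\cA$ and $w_\cB$, and adding the edge $\{w_\cA,w_\cB\}$.

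The verification splits into three local checks. Counting edges and vertices, $|V(\cN)|=|V(\cN')|+2$ and $|E(\cN)|=|E(\cN')|+3$, so $r(\cN)=r(\cN')+1$; moreover each of $w_\cA,w_\cB$ has degree three while all other vertices keep their degrees, so $\cN$ is a simple, connected phylogenetic network on $X$. For the display of $\cF$, I would reuse $\cN'[\cT]$ for every $\cT\in\cF$, absorbing any of $w_\cA,w_\cB$ that happens to lie on a subdivided edge of that image; since such vertices then have degree two in the enlarged subtree, suppression still yields $\cT$. Trees in $\cF'\setminus\{\cT_e\}$ are treated identically. For $\cT_e$ itself, take $\cN[\cT_e]$ to be the union of $\cN'[\cA]$ and $\cN'[\cB]$ with $e_\cA,e_\cB$ replaced by their subdivisions, together with the edge $\{w_\cA,w_\cB\}$; suppressing the degree-two vertices of this subtree reconstitutes $\cT_e$, with $w_\cA$ playing the role of $a$ when $a$ is internal and being itself suppressed when $a$ is a leaf, and symmetrically for $w_\cB$. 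Vertex disjointness is inherited within each forest because $w_\cA,w_\cB$ appear in no other $\cF'$-image, and every subdivided edge of $\cN'$ lies in at most one $\cF$-image.

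The main obstacle I expect is the pendant-edge subcase: when $a$ (or $b$) is a leaf of $\cT_e$ the path used in the generic case degenerates and one must fall back to subdividing the pendant edge of $a$ itself in $\cN'$. The care point is to confirm that in this scenario $w_\cA$ indeed appears with degree two inside $\cN[\cT_e]$, so that suppression correctly reattaches $a$ as a pendant leaf of the right vertex of $\cT_e$. Once this boundary case is handled, every remaining check is a short, local verification, and simplicity of $\cN$ is immediate because $w_\cA$ and $w_\cB$ are fresh vertices not adjacent to any pre-existing edge.
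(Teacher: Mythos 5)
Your proposal is correct and is essentially the paper's own argument: the paper likewise restores $e$ by subdividing one suitable edge of $\cN'$ on each side of the deletion (the pendant edge of a leaf endpoint, or any edge in the image of the edge created by suppressing an internal endpoint) and joining the two subdivision vertices, splitting into three explicit cases according to whether the endpoints of $e$ are leaves or degree-three vertices of $\cF'$. Your $\cA$/$\cB$ formulation merely unifies those three cases, and the verification of the reticulation count, the displays, and the disjointness is the same.
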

\begin{proof} 
\blue{Suppose that $\cN'$ displays $\cF$ and $\cF'-e$. If $e$ 
is an isolated cherry $\{x,y\}$ in $\cF'$, $x,y \in X$, then insert a new edge
into $\cN'$ whose end vertices subdivide the two pendant edges in $\cN'$
that contain $x$ and $y$. If $e=\{u,x\}$ contains a leaf $x$, $x \in X$, and
a vertex $u$ with degree 3 in $\cF'$, then let $f$ be the edge in 
$\cF'-e$ that results from suppressing $u$ after removing $e$ from $\cF'$, 
and insert a new edge into $\cN'$ whose end vertices subdivide the 
pendant edge containing $x$ in $\cN'$ and any edge in the image of 
the edge $f$ in $\cN'$. And, if $e=\{u,v\}$,
with $u$ and $v$ both having degree 3 in $\cF$, 
then let $f$ and $g$ be the two edges in $\cF'-e$ that 
result from suppressing the vertices $u$ and $v$ in $\cF-e$, 
and insert a new edge into $\cN'$ whose end vertices subdivide
any edge in the image of $f$ in $\cN'$ and any edge in the 
image of $g$ in $\cN'$ (noting that the images of $f$ and 
$g$ must be disjoint since $\cN'$ displays $\cF-e$, and 
$f$ and $g$ are contained in different components of $\cF-e$). }

\blue{In all three cases it is straight-forward to see that 
$\cN$ is a phylogenetic network that displays $\cF$ and $\cF'$, and 
that $r(\cN)=r(\cN')+1$.}
\end{proof}

\begin{theorem}\label{uppercherry}
	Suppose that $\sigma$  is a cherry picking sequence for two forests $\cF$ 
	and $\cF'$ on $X$. Then there is a phylogenetic network $\cN$ on $X$
	that displays $\cF$ and $\cF'$ with $w(\sigma) \ge r(\cN)$.
\end{theorem}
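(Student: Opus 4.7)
The plan is to prove Theorem~\ref{uppercherry} by induction on the length $m$ of $\sigma$. The base case $m=1$ is immediate: then $\cF=\cF'=\{x_1\}$ and the single-vertex network $\cN=x_1$ displays both forests with $r(\cN)=0=w(\sigma)$.

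For the inductive step $m\ge 2$, consider the reduction that transforms $(\cF[1],\cF'[1])=(\cF,\cF')$ to $(\cF[2],\cF'[2])$. The truncated sequence $\sigma'=(x_2,\dots,x_m)$ is a cherry picking sequence for $\cF[2]$ and $\cF'[2]$, so by the inductive hypothesis there is a phylogenetic network $\cN'$ that displays $\cF[2]$ and $\cF'[2]$ with $r(\cN')\le w(\sigma')$. It remains, in each of the cases (C1), (C2), (C3) for the first reduction, to modify $\cN'$ into a phylogenetic network $\cN$ that displays the original $\cF$ and $\cF'$ without increasing $r$ by more than $c(x_1)$.

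If the first reduction is of type (C1), then $c(x_1)=0$ and $(x_1,y)$ is a cherry of both $\cF$ and $\cF'$ with $\cF[2]=\cF-x_1$ and $\cF'[2]=\cF'-x_1$. Applying Lemma~\ref{reviewer1}(i) reinstates $x_1$ as a pendant edge attached to the pendant edge of $y$ in $\cN'$, producing a network $\cN$ displaying $\cF$ and $\cF'$ with $r(\cN)=r(\cN')\le w(\sigma')=w(\sigma)$. If the first reduction is of type (C3), then $c(x_1)=0$ and $x_1$ is a component of one of the forests (say $\cF$), with $\cF[2]=\cF-x_1$ and $\cF'[2]=\cF'-x_1$; Lemma~\ref{reviewer1}(ii) then yields a network $\cN$ displaying $\cF$ and $\cF'$ with $r(\cN)=r(\cN')\le w(\sigma)$.

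If the first reduction is of type (C2), then $c(x_1)=1$ and, inspecting each sub-case (a)(i)--(iii), (b)(i)--(ii), and (c) of the definition, precisely one of $\cF[2]$, $\cF'[2]$ equals the corresponding original forest while the other is obtained from the original by deleting a single edge $e$. Thus, after relabelling the roles of $\cF$ and $\cF'$ if necessary, $\cN'$ displays $\cF$ and $\cF'-e$. Lemma~\ref{reviewer2} then produces a phylogenetic network $\cN$ that displays both $\cF$ and $\cF'$ with $r(\cN)=r(\cN')+1\le w(\sigma')+1=w(\sigma)$. The main (if mild) obstacle is the bookkeeping in case (C2): one must check, sub-case by sub-case, that exactly one edge is removed and the other forest is untouched, so that a single invocation of Lemma~\ref{reviewer2} is enough. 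Once this is done the induction closes and the theorem follows.
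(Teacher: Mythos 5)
Your proof is correct and follows essentially the same route as the paper: both arguments reconstruct the network by undoing one reduction at a time, using Lemma~\ref{reviewer1} for (C1)/(C3) steps (reticulation number unchanged) and Lemma~\ref{reviewer2} for (C2) steps (reticulation number increases by one). The only difference is that you induct on the length $m$ while the paper inducts on the weight $w(\sigma)$, handling maximal runs of weight-zero reductions in bulk; this is a cosmetic distinction.
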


\begin{proof} 
\blue{Clearly, we can assume $|X|\ge 2$.
Let $\sigma=(x_1,x_2,\dots,x_m)$, $m\geq 2$,} 
so that $\cF[1]=\cF$ and $\cF'[1]=\cF'$ in the definition of a cherry picking sequence.
We use induction on the weight $w(\sigma)$ of $\sigma$.

\noindent \blue{Base Case: 
Suppose $w(\sigma) =0$. 
Then only the reductions in (C1) or (C3) are applied at stage $i$ in $\sigma$, for all $1 \le i \le m-1$. 
Now, clearly the two forests $\cF[m-1]$ and $\cF'[m-1]$ 
are displayed by the phylogenetic tree $\{x_{m-1},x_m\}$. So, by applying 
Lemma~\ref{reviewer1} $(m-2)$-times it follows that $\cF$ and $\cF'$ are
displayed by some phylogenetic tree $\cT$ on $X$. Since $w(\sigma)=r(\cT)=0$,  
the base case holds.}

\blue{Now suppose that the theorem holds for all cherry picking 
sequences $\sigma'$ for two forests with $0 \le w(\sigma') \le w(\sigma)-1$, and that $w(\sigma) \ge 1$.
Let $1 \le i \le m-1$  be the smallest $i$ such that $c(x_i) =1$, i.e. $i$ is the first time that
we apply a (C2)-reduction in $\sigma$.}

\blue{Suppose $i > 1$. If 
we can find a phylogenetic network $\cN'$ that displays $\cF[i]$ and $\cF'[i]$
so that, for the cherry picking sequence
$\sigma' = (x_i,\dots,x_m)$ for $\cF[i]$ and $\cF'[i]$, we have
$w(\sigma')\ge r(\cN')$, then by applying Lemma~\ref{reviewer1} $(i-1)$ times
to the sequence $(x_1,\dots,x_{i-1})$, it follows that there is a 
phylogenetic network $\cN$ that displays $\cF$ and $\cF'$ with
$w(\sigma) = w(\sigma') \ge r(\cN')= r(\cN)$. Thus we may
assume without loss of generality that $i=1$.}

\blue{So, let $\sigma =(x_1,\dots,x_m)$, $m \ge 2$, be a cherry picking sequence
for $\cF$ and $\cF'$ with $w(\sigma)>0$ and such that \blue{a (C2)-reduction is applied to $x_1$.}
Then by induction, there must be a phylogenetic network $\cN'$ on $\{x_1,\ldots, x_m\}$  that 
displays $\cF[2]$ and $\cF'[2]$
with $w((x_1,\dots,x_m))\ge r(\cN')$. But
since a (C2)-reduction corresponds to 
removing an edge from one of $\cF$ or $\cF'$, by Lemma~\ref{reviewer2} it 
follows that there is a phylogenetic network $\cN$ \blue{on $X$} that displays $\cF$ and $\cF'$ with 
$$
r(\cN) = r(\cN')+1 \ge w(\sigma')+ 1 = w(\sigma).
$$
This completes the proof of the theorem. }
\end{proof}

\teal{We end this section by noting that the proof of Theorem~\ref{uppercherry} in combination with the constructive proofs of Lemmas~\ref{reviewer1} and~\ref{reviewer2} lend themselves to a polynomial-time algorithm that reconstructs a phylogenetic network $\cN$ from a given cherry picking sequence $\sigma$ such that $w(\sigma)\geq r(\cN)$.}

\section{\blue{Four} technical lemmas}
\label{sect:tech}

In this section, we shall prove \blue{four} technical  lemmas 
\blue{(Lemmas~\ref{lem:simplify}, \ref{leafreduction}, \ref{reduction} and~\ref{lem:smallblobs})} 
that are concerned
	with how we can reduce the reticulation number of a phylogenetic network displaying two forests
	whilst \blue{retaining} the displaying property. We shall use these lemmas 
to prove Theorem~\ref{uppernet}  in the next section. 

We start by introducing some further definitions.
A {\em pseudo-network $\cN$ on $X$} is a connected multi-graph 
with leaf set $X \subseteq V(\cN)$ such that 
all non-leaf vertices have degree \blue{three}. Note that any multi-edge 
in a pseudo-network can contain only two edges, and that 
any pseudo-network  that does not contain a multi-edge is a phylogenetic network.
\blue{Note also that the suppression of a multi-edge and the subsequent suppression of the resulting degree-two vertices may create a new multi-edge.}
In addition, we extend the notions introduced for phylogenetic networks 
to pseudo-networks in the  natural way. 

\blue{Now, let $\cN$ be a pseudo-network on $X$. \teal{A maximal 2-connected\footnote{A graph $G$
is {\it 2-connected} if the graph obtained by removing any vertex from $G$ is connected. }
subgraph of $\cN$ that is not an edge is called a {\em blob} 
of $\cN$. In particular, note that a multi-edge in $\cN$ could be a blob. } 
Let $\cB$ be a blob of $\cN$, and let $e=\{u,v\}$ be an edge of $\cN$. We say that
$\cB$ is {\em incident} with $e$ (or that $e$ is {\em incident} with $\cB$) if precisely one of $u$ and $v$
is contained in $V(\cB)$. Furthermore, if $e$ is incident with $\cB$ and contains a leaf $x\in X$,
then we say that $\cB$ is {\em incident} with $x$.
We denote the subset of leaves of $\cN$
that are incident with $\cB$ by $L(\cB)$. Now let $E$ be the subset of edges of $\cN$ that are incident with $\cB$. We refer to $\cB$ as a {\it pendant blob} of $\cN$ if at most one edge in $E$ is a non-trivial cut-edge of $\cN$ and all other edges in $E$ are incident with a leaf.}


\teal{In what follows, for a pseudo-network $\cN$ on $X$ that displays two forests $\cF$ and $\cF'$, 
we shall be interested in understanding 
how $r(\cN)$ changes if we transform $\cN$ into a phylogenetic network that displays $\cF$ and $\cF'$ by removing certain 
leaves or edges. To this end, suppose that $|X|\geq 2$, and $\cN$ contains precisely one multi-edge}. 
We shall call a sequence $\cN_1,\ldots, \cN_k$,
$k\geq 2$, of distinct pseudo-networks on $X$ a \blue{{\em simplification sequence for $\cN$} if $\cN=\cN_1$,}
\blue{each} $\cN_{i+1}$ is obtained from $\cN_i$ by suppressing a single multi-edge in $\cN_i$ and suppressing the 
resulting vertices of degree two for all $1 \le i \le k-1$, and $\cN_k$ 
is a phylogenetic network on $X$.

%

\begin{lemma}\label{lem:simplify} 
	Suppose \blue{that} $\cF$ and $\cF'$ are two forests on $X$ \blue{with} $|X|\ge 2$
	and that $\cN$ is a pseudo-network on $X$ with precisely one  multi-edge 
	\blue{such that $\cN$} displays $\cF$ and $\cF'$.
	In addition, suppose that the blob in $\cN$ that 
	contains the multi-edge is incident with at least two cut-edges of $\cN$.
	Then there exists \blue{a simplification sequence} $\cN_1,\ldots, \cN_k$ for $\cN$, \teal{for} \blue{some $k\geq 2$}.
	Moreover, $\cN_k$ displays $\cF$ and $\cF'$ and $r(\cN) >r(\cN_k)$.
\end{lemma}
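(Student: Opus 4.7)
The plan is to proceed by strong induction on $r(\cN)$, constructing the simplification sequence one suppression at a time. First I will argue that a single valid suppression always exists under the hypothesis, then that when the output $\cN_2$ still contains a multi-edge, the hypothesis of the lemma is preserved so the induction goes through.

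Let the unique multi-edge of $\cN$ be between vertices $u$ and $v$, and let $u'$, $v'$ denote the unique other neighbors of $u$, $v$, respectively. The first key step is to show that $u' \ne v'$. If instead $u' = v'$, then $u$, $v$, $u'$ would form a triangle containing the multi-edge, and since $u$ and $v$ have no edges outside this triangle, every path from a vertex beyond $u'$ back to the triangle must pass through $u'$; hence $\cB$ equals this triangle, leaving only $u'$'s third edge as a cut-edge incident to $\cB$, contradicting the hypothesis. With $u' \ne v'$, the suppression of the multi-edge followed by suppression of the resulting degree-2 vertices $u$ and $v$ is well-defined, producing a pseudo-network $\cN_2$ on $X$ in which $u'$ and $v'$ are joined by a new edge (forming a new multi-edge precisely when $u'$ and $v'$ were already adjacent in $\cN$). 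A direct vertex-and-edge count gives $|V(\cN_2)| = |V(\cN)| - 2$ and $|E(\cN_2)| = |E(\cN)| - 3$, so $r(\cN_2) = r(\cN) - 1$. Since any image of a tree in $\cF$ or $\cF'$ in $\cN$ is a subtree, it uses at most one copy of the multi-edge, and such an image can be rerouted along the new edge $\{u', v'\}$; hence $\cN_2$ still displays $\cF$ and $\cF'$.

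The main obstacle is to verify that the hypothesis of the lemma propagates to $\cN_2$ when $\cN_2$ has a new multi-edge at $\{u', v'\}$. Let $\cB_2$ be the blob of $\cN_2$ containing this multi-edge. Every cut-edge of $\cN$ incident to $\cB$ has its $\cB$-endpoint in $V(\cB) \setminus \{u, v\}$ (since $u$ and $v$ have no external edges) and remains a cut-edge of $\cN_2$, because the suppression takes place entirely inside $\cB$ and preserves the bridging property. Moreover, $V(\cB) \setminus \{u, v\} \subseteq V(\cB_2)$: for any such $w$, the two internally disjoint paths in $\cB$ between $w$ and $u'$ guaranteed by 2-connectivity survive in $\cN_2$ after any subpath of the form $u'$-$u$-$v$-$v'$ (or its reverse) is replaced by the new edge $\{u', v'\}$. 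Consequently at least the same number of cut-edges incident to $\cB$ in $\cN$ remain incident to $\cB_2$ in $\cN_2$, so the hypothesis propagates.

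The induction then proceeds as follows. In the base case $r(\cN) = 1$, the multi-edge is the only cycle in $\cN$, so $\cN_2$ is a phylogenetic tree and $(\cN, \cN_2)$ is the desired simplification sequence. For $r(\cN) > 1$, if $\cN_2$ has no multi-edge we are already done with $k = 2$; otherwise $\cN_2$ has exactly one multi-edge and satisfies the hypothesis by the above, so by strong induction (applicable since $r(\cN_2) < r(\cN)$) there exists a simplification sequence $\cN_2, \cN_3, \ldots, \cN_k$ ending at a phylogenetic network $\cN_k$ displaying $\cF$ and $\cF'$; prepending $\cN_1 = \cN$ completes the construction. Termination is automatic as $r$ strictly decreases at each step, which also yields $r(\cN) > r(\cN_k)$.
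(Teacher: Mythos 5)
Your proposal is correct and takes essentially the same route as the paper: both hinge on showing that the two neighbours $u',v'$ of the multi-edge are distinct (forced by the two-cut-edge hypothesis), that each suppression drops $r$ by exactly one while the displaying property is preserved by rerouting images through the new edge, and that the two-cut-edge hypothesis propagates to the next pseudo-network so the process terminates in a phylogenetic network. The only difference is presentational -- you make the propagation step explicit via a Menger-type argument and organise the whole proof as an induction on $r(\cN)$, whereas the paper asserts the existence of the (unique) simplification sequence directly and then verifies displaying along it.
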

\begin{proof}
\teal{Let $\cB$ be the blob in $\cN$ that contains the unique multi-edge in $\cN$, and 
 let $u$ and $v$ be the two vertices contained in the multi-edge. In addition let $p$ and $q$ be the 
 vertices in $\cN$ such that $\{u,p\}$ and $\{v,q\}$ are in $E(\cN)$.
 Then since $\cB$ is incident with at least two cut-edges it 
 follows that $p \neq q$. Thus, the pseudo-network obtained 
 by suppressing the multi-edge and any resulting degree-two vertex
is either a phylogenetic network on $X$, or contains a blob  
that is incident with at least two cut-edges and contains a unique multi-edge.}
	This immediately implies that there is a unique simplification sequence 
	$\cN_1,\ldots, \cN_k$ for $\cN$, \teal{for} \blue{some $k\geq 2$}.
		
	We now show that $\cN_{i}$ displays the forests $\cF$ and $\cF'$, for all $1 \le i \le k$.
	\blue{For some $i\in\{1,\ldots, k-1\}$,} suppose \blue{that} $\cN_{i}$ displays the forests $\cF$ and $\cF'$, \blue{that $e=\{u,v\}$ and $e'=\{u,v\}$ are edges of $\cN_i$}
	and that, \blue{$e$ is suppressed}
	to obtain $\cN_{i+1}$.  Choose images for $\cF$ and $\cF'$ in $\cN$, respectively.
	It is straight-forward to check that if $e$ 
	is not contained in the image of $\cF$ or $\cF'$, then the 
	pseudo-network $\cN_{i+1}$ displays $\cF$ and $\cF'$.
	Moreover, if \blue{one of $e$ and $e'$, say $e$,}
	is contained in the image of $\cF$ and \blue{$e'$ is contained} in the image of
	$\cF'$, then clearly we can alter the 
	image of $\cF$ so that only \blue{$e'$ is} contained in the image of $\cF$ and $\cF'$.
	Since $\cN$ displays  $\cF$ and $\cF'$, it follows that $\cN_{i+1}$ also does for all $1 \le i \le k-1$.

	Finally, we note that $r(\cN) > r(\cN_k)$ since clearly $r(\cN_i) = r(\cN_{i+1})+1$, for all $1 \le i \le k-1$. 
\end{proof}

We \blue{now} consider the effect of removing certain leaves from a phylogenetic network 
that displays two forests. 
\blue{To this end, we define for a phylogenetic network $\cN$ on $X$ and $x \in X$, the graph $\cN-x$ to be the 
pseudo-network on \blue{$X\setminus\{x\}$} obtained from $\cN$ by removing $x$, 
the pendant edge $e_x \in  E(\cN)$, and suppressing the resulting degree-two vertex.} We first make a simple observation
whose proof is straight-forward.

\begin{observation}\label{help}
	Suppose \blue{that} $\cN=(V,E)$ is a phylogenetic network on $X$ \blue{with} $|X|\ge 2$, \blue{
	$x \in X$, and $\{u,x\}$, $\{u,v\}, \{u,w\} \in E$ with $u,v,w \in V\setminus\{x\}$. Then $\cN-x$ is a pseudo-network, and $\cN-x$ is a phylogenetic network on $X\setminus\{x\}$ if and only if $\{v,w\}\not\in E$.}
\end{observation}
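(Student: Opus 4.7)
The plan is to unpack the construction of $\cN-x$ in one step and then verify the two claims (pseudo-network status, and the characterization of when it is a phylogenetic network) by local analysis around the vertex $u$. Since $u$ has exactly three neighbors $x,v,w$ in $\cN$, removing $x$ and the pendant edge $\{u,x\}$ leaves $u$ with degree two with neighbors $v$ and $w$, and suppressing $u$ replaces the path $v-u-w$ by a single new edge joining $v$ and $w$. Note that $v \neq w$: otherwise $\{u,v\}$ and $\{u,w\}$ would be the same edge of the simple graph $\cN$, contradicting that $u$ has degree three.

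For the first claim (that $\cN-x$ is a pseudo-network), I would check each piece of the definition. Connectivity is preserved because we delete a pendant vertex and then perform a suppression, neither of which disconnects a connected graph. The leaf set drops exactly the leaf $x$ and is otherwise unchanged, since each vertex in $V\setminus\{x,u\}$ keeps the same degree (only $v$ and $w$ are locally affected, and each loses exactly one incident edge $\{u,v\}$ or $\{u,w\}$ and gains exactly one incidence with the new edge, so their degrees are unchanged). In particular every non-leaf vertex still has degree three. Finally, the only new edge introduced is the one between $v$ and $w$, so the only multi-edge that could appear in $\cN-x$ consists of at most two edges (the new one plus at most one pre-existing edge $\{v,w\}$), matching the pseudo-network convention.

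For the characterization, I would argue both directions directly from the preceding local picture. If $\{v,w\}\notin E$, then the edge created by the suppression of $u$ is new and $\cN-x$ contains no multi-edge; since it is already known to be a pseudo-network, it is in fact simple, hence a phylogenetic network on $X\setminus\{x\}$. Conversely, if $\{v,w\}\in E$, then after suppressing $u$ there are two distinct edges between $v$ and $w$ in $\cN-x$, so $\cN-x$ is not simple and therefore is not a phylogenetic network. I do not expect any genuine obstacle here; the only subtlety worth flagging is ensuring $v\neq w$ (addressed above) and keeping careful bookkeeping that the degrees of $v$ and $w$ are left invariant under the remove-and-suppress procedure.
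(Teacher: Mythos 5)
Your proof is correct and is exactly the straightforward local analysis around $u$ that the paper has in mind (the paper omits the proof, calling it straight-forward). The key points — that $v\neq w$ because $u$ has degree three in a simple graph, that the degrees of $v$ and $w$ are preserved by the remove-and-suppress step, and that the only possible multi-edge is the pair of edges between $v$ and $w$ arising when $\{v,w\}\in E$ — are all present and handled properly.
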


\begin{figure}[t]
	\center
		\scalebox{1}{\input{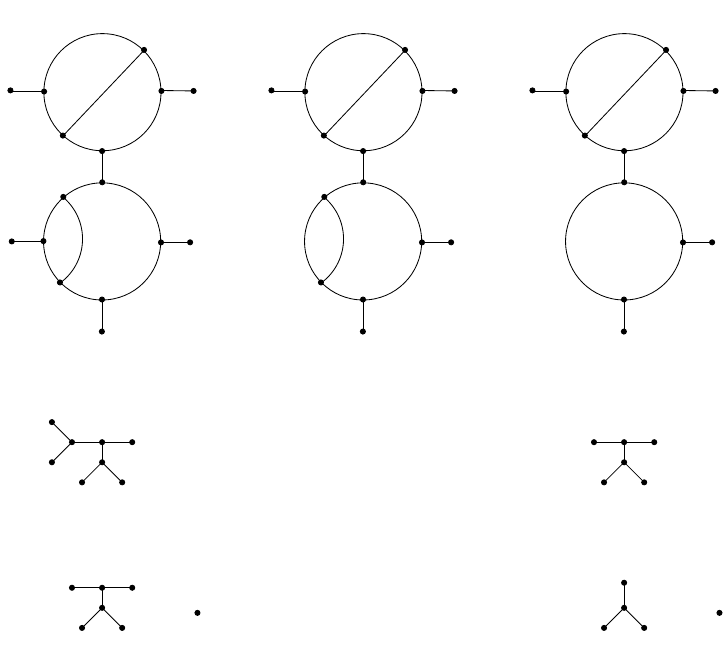_t}}
	\caption{\blue{An explicit example of Lemma~\ref{leafreduction} applied to $x=3$ of  a phylogenetic network $\cN$ on $\{1,2,\ldots,5\}$ with a pendant blob $\cB$ that displays two forests $\cF$ and $\cF'$ on $\{1,2\ldots,5\}$. Then $\cN-3$ is not a phylogenetic network, and there exists a phylogenetic network $\cN'$ on $\{1,2,4,5\}$ with $r(\cN)>r(\cN')$ that displays $\cF-3$ and $\cF'-3$.}
	\label{fig:configuration}
 }
\end{figure}

\blue{Using Observation~\ref{help}, \teal{we obtain the following lemma, which is illustrated in Fig.~\ref{fig:configuration}.}}

\begin{lemma}\label{leafreduction}
	Suppose \blue{that} $\cF$ and $\cF'$ are two forests on $X$ \blue{with} $|X|\ge2$ 
	and \blue{that} $\cN$ is a phylogenetic network on $X$ that displays $\cF$ and $\cF'$.
	In addition, suppose that there is a pendant blob $\cB$ of $\cN$ with $|L(\cB)| \ge 2$
	and some $x \in L(\cB)$ such that the pseudo-network \blue{$\cN-x$ on $X\setminus\{x\}$} 
	is {\em not} a phylogenetic network. Then there exists a phylogenetic network
	$\cN'$ on \blue{$X\setminus\{x\}$} such that $r(\cN)>r(\cN')$ and $\cN'$ displays $\cF-x$ and $\cF'-x$.
\end{lemma}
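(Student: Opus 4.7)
The plan is to reduce to Lemma~\ref{lem:simplify} applied to the pseudo-network $\cN-x$ whilst tracking that $r(\cN-x)=r(\cN)$ exactly. By Observation~\ref{help}, the assumption that $\cN-x$ is not a phylogenetic network forces the unique internal neighbor $u$ of $x$ in $\cN$ to have its other two neighbors $v,w$ already joined by an edge, so the suppression of $u$ in the passage to $\cN-x$ produces exactly one multi-edge, lying between $v$ and $w$ inside the blob $\cB'$ of $\cN-x$ that arises from $\cB$. Removing a pendant leaf together with suppressing the adjacent degree-two vertex strips off two vertices and, in the multi-graph sense, two edges, so $r(\cN-x)=r(\cN)$.

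Next I would verify that $\cN-x$ displays $\cF-x$ and $\cF'-x$: the only change required to images $\cN[\cF]$ and $\cN[\cF']$ is within the images of the trees of $\cF$ and $\cF'$ that contain $x$, and this modification mirrors exactly the operation $\cT\mapsto\cT-x$ on forests (deleting the leaf $x$ with its pendant edge, then suppressing the unique resulting degree-two vertex when $|L(\cT_x)|\ge 2$).

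The key structural step is to check that $\cB'$ is incident with at least two cut-edges of $\cN-x$. Since $\cB$ is a pendant blob, its incident edges in $\cN$ consist of $|L(\cB)|$ pendant edges (including $e_x$) together with at most one non-trivial cut-edge, and the incident edges of $\cB'$ in $\cN-x$ are exactly these minus $e_x$. If $L(\cB)\subsetneq X$, connectedness of $\cN$ forces the non-trivial cut-edge to exist, yielding $(|L(\cB)|-1)+1=|L(\cB)|\ge 2$ incident cut-edges at $\cB'$; and if $L(\cB)=X$ with $|L(\cB)|\ge 3$, then $|L(\cB)|-1\ge 2$ trivial cut-edges already suffice. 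In either subcase Lemma~\ref{lem:simplify} applied to $\cN-x$ produces a phylogenetic network $\cN'$ on $X\setminus\{x\}$ that displays $\cF-x$ and $\cF'-x$ with $r(\cN-x)>r(\cN')$; combining with $r(\cN-x)=r(\cN)$ delivers the required strict inequality $r(\cN)>r(\cN')$.

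The main obstacle is the borderline case $L(\cB)=X$ and $|L(\cB)|=2$, in which only a single cut-edge remains incident with $\cB'$ in $\cN-x$ and Lemma~\ref{lem:simplify} cannot be invoked directly. Here $X\setminus\{x\}=\{y\}$ is a single leaf, so one simply takes $\cN'$ to be the single-vertex phylogenetic network $y$; it vacuously displays $\cF-x$ and $\cF'-x$, and since $\cB$ contains a cycle we have $r(\cN)\ge 1>0=r(\cN')$, completing the proof in this exceptional subcase.
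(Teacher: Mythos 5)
Your proposal is correct and follows essentially the same route as the paper: invoke Observation~\ref{help} to locate the unique multi-edge of $\cN-x$ inside the blob $\cB'$ inherited from $\cB$, verify that $\cB'$ meets the two-cut-edge hypothesis of Lemma~\ref{lem:simplify}, and treat $|X|=2$ separately by taking $\cN'$ to be the single remaining leaf. Your version is in fact slightly more explicit than the paper's on two points it leaves implicit, namely that $r(\cN-x)=r(\cN)$ and the exact count of cut-edges incident with $\cB'$.
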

\begin{proof}
Suppose \blue{first that $X=\{x,y\}$}. Then we can take $\cN'$ to be the vertex $y$ to complete the proof of the lemma.
	
Now, \blue{suppose that $|X|\ge 3$. Note that $\cN-x$ clearly displays $\cF-x$ and $\cF'-x$. Let $v$ and $w$ be the two vertices adjacent to the non-leaf vertex of $e_x$}. Since $\cN$ is a phylogenetic network, we have $v\ne w$. Moreover,
by Observation~\ref{help}, $\{v,w\}$ is an edge of $\cN$ and $\cN-x$ contains the unique multi-edge between $v$ and $w$. \blue{This multi-edge} is contained in a blob $\cB'$ of \blue{$\cN-x$. Since $\cB$ is a pendant blob} in $\cN$, it follows that $\cB'$ is \blue{a pendant blob in $\cN-x$. Furthermore, since $|L(\cB)|\geq 2$ and $|X|\ge 3$, either $|L(\cB')|\geq 2$, or $|L(\cB')|=1$ and $\cB'$ is incident with at least one non-trivial cut-edge.} 
The proof of the lemma now follows immediately by applying Lemma~\ref{lem:simplify} to \blue{$\cN-x$} and $\cB'$.
\end{proof}

We now consider what happens when we remove certain edges from a phylogenetic network 
that displays two forests.

\begin{lemma}\label{reduction}
	Suppose \blue{that} $\cF$ and $\cF'$ are two forests on $X$ \blue{with} $|X|\ge2$ 
	and \blue{that} $\cN$ is a phylogenetic network on $X$ that displays $\cF$ and $\cF'$. 
	Let $\cN[\cF]$ and $\cN[\cF']$ denote \blue{images} of $\cF$ and $\cF'$
	in $\cN$, respectively.	If $e$ is an edge in a pendant blob $\cB$ of $\cN$ with $|L(\cB)|\ge 2$ and
	\begin{itemize}	
	\item[(a)] $e$ is neither contained in  $\cN[\cF]$ nor  in $\cN[\cF']$, then 
	there is a phylogenetic network $\cN'$ on $X$ that displays $\cF$ and $\cF'$ with $r(\cN) > r(\cN')$.
	\item[(b)] 
 \blue{$e$ is not in $\cN[\cF']$ but there exists an edge $f$ in $\cF$ such that $e$ is in the image of $f$ in $\cN[\cF]$,}
   then there is a phylogenetic network 
	$\cN'$ on $X$ that displays $\cF -f$ and $\cF'$ and $r(\cN) > r(\cN')$.
   \end{itemize}
\end{lemma}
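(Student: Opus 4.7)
The idea for both parts is to delete $e$ from $\cN$, suppress the two resulting degree-two vertices, and clean up any multi-edges that are created by invoking Lemma~\ref{lem:simplify}. Write $e=\{u,v\}$. Since $\cN$ is simple and $u,v\in V(\cB)$ have degree three, both have two further distinct neighbors; after deleting $e$ each of $u,v$ has degree two and may be suppressed. A direct edge-vertex count shows that deleting $e$ decreases the reticulation number by one, while suppressing a degree-two vertex leaves $r$ unchanged and suppressing a multi-edge decreases $r$ by one; so the final phylogenetic network will automatically satisfy $r(\cN')<r(\cN)$.

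\emph{Part (a).} Because $e$ lies in neither image, $\cN[\cF]$ and $\cN[\cF']$ remain valid images of $\cF$ and $\cF'$ in $\cN-e$ and in the pseudo-network $\cN^*$ obtained after suppressing $u$ and $v$. If $\cN^*$ is already a phylogenetic network, set $\cN'=\cN^*$. Otherwise $\cN^*$ contains one or two multi-edges, all located inside what used to be the blob $\cB$. Because $\cB$ is pendant and $|L(\cB)|\ge 2$, at least two pendant edges of $\cN$ are incident with $\cB$, and these remain cut-edges in $\cN^*$. Consequently any blob of $\cN^*$ containing a multi-edge is still incident with at least two cut-edges, so applying Lemma~\ref{lem:simplify} (one multi-edge at a time) yields a phylogenetic network $\cN'$ that displays $\cF$ and $\cF'$ with $r(\cN')<r(\cN)$.

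\emph{Part (b).} The same procedure works once one checks that $\cN-e$ displays $\cF-f$ together with $\cF'$. Since $e\notin \cN[\cF']$, the subgraph $\cN[\cF']$ is unaffected and still displays $\cF'$. For $\cF-f$, the edge $f=\{p,q\}$ of $\cT_f$ is represented in $\cN[\cT_f]$ by a path containing $e$. Removing $e$ from this path disconnects $\cN[\cT_f]$ into two subgraphs that, after the later degree-two vertex suppressions, are precisely images of the two trees forming $\cT_f-f$; together with the unchanged images of the remaining trees of $\cF$ this provides an image of $\cF-f$ in $\cN^*$. Cleaning up multi-edges exactly as in part~(a) then completes the argument.

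\emph{Main obstacle.} The delicate step is that suppressing both $u$ and $v$ can create up to two simultaneous multi-edges, whereas Lemma~\ref{lem:simplify} is stated for a pseudo-network with precisely one multi-edge. I would circumvent this by suppressing $u$ and $v$ one at a time and invoking Lemma~\ref{lem:simplify} after each suppression that creates a multi-edge. One then needs to verify that after each such invocation the blob containing the next multi-edge still retains at least two incident cut-edges; the pendant blob hypothesis together with $|L(\cB)|\ge 2$ provides the needed reservoir of pendant edges (and possibly the single non-trivial cut-edge) incident with $\cB$, which survive the process and supply the required cut-edges.
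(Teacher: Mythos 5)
Your proposal is correct and follows essentially the same route as the paper: delete $e$ from $\cB$, suppress the two resulting degree-two vertices, observe that the images give the required display of $\cF$ (resp.\ $\cF-f$) and $\cF'$ with the reticulation number dropping by one, and then invoke Lemma~\ref{lem:simplify} using the fact that $\cB$ is incident with at least two cut-edges. If anything you are more careful than the paper's own (very terse) proof about the possibility of two simultaneous multi-edges, which the paper simply absorbs into its appeal to Lemma~\ref{lem:simplify}.
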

\begin{proof} 
Let $e$ be as in the statement of the lemma. 
Remove $e$ from $\cB$ and suppress the two 
resulting degree-two vertices. Then it is straight-forward
to check that the resulting multi-graph $\cN'$ is a 
pseudo-network on $X$ with $r(\cN) > r(\cN')$ that 
displays $\cF$ and $\cF'$ in case (a) and $\cF -f$ and $\cF'$ in case (b).

If $\cN'$ is a phylogenetic network, this \blue{completes} the proof of the lemma. Otherwise, 
since by assumption $\cB$ must be incident with at least two cut-edges, 
it follows that we can apply Lemma~\ref{lem:simplify} to complete the proof.
\end{proof}

\blue{Our final lemma in this section can be used to remove two special types of pendant blobs from a network.
Before stating the lemma, we first need a new definition.
Suppose that $\cN$ is a phylogenetic network on $X$, $|X|\ge 1$, 
	and that $\cB$ is a \blue{pendant} blob of $\cN$ with $|L(\cB)| \le 1$. 
	If $\cN \neq \cB$, then let $\{u,v\}$ be the cut-edge that 
	disconnects $\cB$ from $\cN$ with $v \in V(\cB)$.
	We define \blue{the phylogenetic network} $\cN-\cB$ as follows: }

\begin{itemize}	
\item  \blue{Suppose that $|L(\cB)|=0$, so
	that $\cN \neq \cB$ as $\cN$ has a non-empty leaf
	set. Then first delete all vertices of $\cB$ and any 
	edges containing them to obtain a graph $\cG$.
	If the pseudo-network obtained by then suppressing $u$ in $\cG$ is a 
	phylogenetic network, then this is defined to be $\cN-\cB$.
	Otherwise, suppressing $u$ in $\cG$ must result in a multi-edge 
	that contains the two vertices of $\cG$ adjacent with $u$, say $w,w'$.
	To obtain $\cN-\cB$ we 
	subdivide the edge $\{w,w'\}$ in the graph $\cG$ by adding
	in a new vertex $u'$ and then add the edge $\{u,u'\}$.}

\item \orange{Suppose $|L(\cB)|=1$, so that $L(\cB)=\{x\}$, \teal{for} some $x \in X$.}
\blue{If $\cN = \cB$, then define $\cN - \cB = \{x\}$. 
Otherwise, we let $\cN-\cB$ be the phylogenetic network obtained 
by first removing all vertices in $\cB$ and edges that contain them from $\cN$ 
and then adding the edge $\{x,u\}$ (i.e. the graph obtained by replacing $\cB$ with leaf $x$).}
\end{itemize}

\newpage

 \begin{lemma}\label{lem:smallblobs}
 	\blue{Suppose that $\cN$ displays two forests $\cF$ and $\cF'$ and
 		that $\cN$ is a phylogenetic network that contains a pendant blob $\cB$ 
 		with $|L(\cB)| \le 1$. Then $\cN-\cB$ displays $\cF$ and $\cF'$ and
 		 $r(\cN)> r(\cN-\cB)$.}
 \end{lemma}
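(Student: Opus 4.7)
The plan is to split the argument according to whether $|L(\cB)|=0$ or $|L(\cB)|=1$, and in each case to establish separately that (i) $\cN-\cB$ displays both $\cF$ and $\cF'$ and (ii) $r(\cN)>r(\cN-\cB)$. The main common tool is that any image $\cN[\cT]$ of a tree $\cT\in\cF\cup\cF'$ is a subdivision of $\cT$, so its degree-one vertices all lie in $X$; combined with the fact that non-leaf vertices of $\cN$ have degree three, every vertex of $V(\cB)$ used by $\cN[\cT]$ has degree two or three in $\cN[\cT]$. This rigidity sharply restricts how $\cN[\cT]$ can interact with $\cB$.

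In the case $|L(\cB)|=0$, the condition $X\neq\emptyset$ forces $\cN\neq\cB$, so there is a unique cut-edge $\{u,v\}$ with $v\in V(\cB)$. I would argue that no image $\cN[\cT]$ meets $V(\cB)$ and no image contains $\{u,v\}$. Indeed, the subgraph of $\cN[\cT]$ induced on $V(\cB)\cap V(\cN[\cT])$ is a forest in which every vertex has degree two or three within $\cN[\cT]$; the only edge of $\cN$ joining $V(\cB)$ to its complement is $\{u,v\}$, so a component of this forest either has no external edge (impossible, as a tree on $\ge 2$ vertices has leaves) or is a single component whose only external attachment is at $v$, in which case a short leaf-count still forces the component to be empty. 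Consequently, deleting $\cB$ and $\{u,v\}$ preserves every image verbatim, and the subsequent suppression of $u$ or (in the exceptional sub-case) the subdivision of $\{w,w'\}$ plus the addition of $\{u,u'\}$ modifies $\cN[\cT]$ at most by removing or inserting a degree-two subdivision vertex, leaving it a subdivision of $\cT$.

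In the case $|L(\cB)|=1$, write $L(\cB)=\{x\}$ with pendant edge $\{x,a\}$, $a\in V(\cB)$. If $\cN=\cB$, then $X=\{x\}$ and the conclusion is immediate. Otherwise, let $\{u,v\}$ be the cut-edge with $v\in V(\cB)$, and carry out the same leaf-counting for the subgraph of $\cN[\cT]$ induced on $V(\cB)\cap V(\cN[\cT])$, this time allowing external attachments at both $a$ (via $\{x,a\}$) and $v$. For $\cT\in\cF\cup\cF'$ with $x\notin L(\cT)$ or $|L(\cT)|=1$ the leaf-count forces this induced subgraph to be empty, so $\cN[\cT]$ does not touch $\cB$. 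For $\cT$ with $x\in L(\cT)$ and $|L(\cT)|\ge 2$, the same analysis forces the induced subgraph to be a simple path from $a$ to $v$, both $\{x,a\}$ and $\{u,v\}$ to lie in $\cN[\cT]$, and every interior vertex of the path to have degree two in $\cN[\cT]$; hence the concatenated path $x$–$a$–$\cdots$–$v$–$u$ subdivides a single edge of $\cT$ incident with $x$, and replacing it by the new pendant edge $\{x,u\}$ yields an image of $\cT$ in $\cN-\cB$.

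For the reticulation count, I would use $r(G)=|E(G)|-|V(G)|+1$ directly. A short tally of vertex and edge changes in each sub-case yields
\[
r(\cN)-r(\cN-\cB)\;=\;r(\cB)-\delta,
\]
with $\delta=0$ except in the sub-case of $|L(\cB)|=0$ where suppressing $u$ would create a multi-edge (so the subdivision-plus-new-edge construction is used), in which case $\delta=1$. Since $\cB$ is a maximal 2-connected subgraph of $\cN$ that is not a single edge, $r(\cB)\ge 1$; and in the exceptional $\delta=1$ sub-case the degree constraints on $\cB$ (every vertex of degree three except $v$ of degree two in $\cB$) give $2|E(\cB)|=3|V(\cB)|-1$ with $|V(\cB)|\ge 3$, whence $r(\cB)=(|V(\cB)|+1)/2\ge 2$. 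In every situation $r(\cN)>r(\cN-\cB)$, as required. The main obstacle is the image-within-$\cB$ analysis: one must rule out branchings or dead-ends of $\cN[\cT]$ inside $\cB$, and this is exactly where the pendant position of $\cB$, its 2-connectedness, and the hypothesis $|L(\cB)|\le 1$ are each used.
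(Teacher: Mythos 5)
Your proof is correct, and its overall skeleton is the same as the paper's: split on $|L(\cB)|\in\{0,1\}$, tally vertices and edges to get $r(\cN)-r(\cN-\cB)=r(\cB)$ in the generic sub-cases and $r(\cB)-1$ in the exceptional sub-case where suppressing $u$ would create a multi-edge, and then bound $r(\cB)$ from below. The two places where you diverge are worth noting. First, the paper's only substantive work is a general proof that $r(\cB)\ge 2$ for \emph{every} blob: it suppresses the degree-two vertices of $\cB$ to get a cubic $2$-connected graph and derives a contradiction from the unique-cycle characterization of graphs with cyclomatic number one. You instead use the generic bound $r(\cB)\ge 1$ where $\delta=0$ and, only in the exceptional sub-case, a handshake count ($v$ has degree two in $\cB$, all other vertices degree three, so $2|E(\cB)|=3|V(\cB)|-1$ and $r(\cB)=(|V(\cB)|+1)/2\ge 2$ since $|V(\cB)|\ge 3$); this is shorter and more elementary, at the cost of being tailored to the pendant, $|L(\cB)|=0$ situation rather than giving the reusable fact $r(\cB)\ge2$. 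Second, you spend most of your effort on the claim that $\cN-\cB$ still displays $\cF$ and $\cF'$, via the leaf-counting argument showing that an image of a tree can meet $\cB$ only along a single $a$--$v$ path (or not at all); the paper dismisses this as straightforward, so your argument fills in a detail the paper omits rather than replacing anything. Both routes are sound.
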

 
 \begin{proof}
 \blue{It is straight-forward to see that 
$\cN-\cB$ displays $\cF$ and $\cF'$. }

\blue{Now consider the blob $\cB$. We first show that $r(\cB) \ge 2$. Towards a contradiction, assume that
$r(\cB) < 2$.
Then $r(\cB) = 1$, since $r(\cB)\neq 0$ as otherwise $\cB$ 
would be a tree which is impossible as it is 2-connected. 
\blue{Consider} the graph $\cB'$ that
is obtained \blue{from $\cB$} by suppressing all vertices in $\cB$ with degree 2. 
Then $\cB'$ is 2-connected and every vertex in $\cB'$ must 
have degree 3. Moreover, since $r(\cB')=r(\cB)=1$ 
there must be an edge whose removal from $\cB'$ gives a tree, since
a connected graph $G$ with $r(G)=1$ contains a unique cycle \cite[Corollary 2, p.29]{berge01}. 
But, if we remove an edge from $\cB'$ 
then every vertex in the resulting graph
must have degree at least 2, and in a tree that is not a single vertex 
there must be at least two vertices with degree 1, a contradiction. So $r(\cB)\ge 2$.}

\blue{Next, suppose $|L(\cB)|=0$. Then
	if the pseudo-network obtained by suppressing $u$ in $\cG$ 
	in the definition of $\cN-\cB$ is a 
	phylogenetic network, then it is
	straight-forward to check that $r(\cN-\cB)=r(\cN)-r(\cB)$, and so
	$r(\cN)> r(\cN-\cB)$. Otherwise, it is straight-forward to 
check that $r(\cN-\cB) = r(\cN)-r(\cB)+1$. Hence, as $r(\cB) \ge 2$,
it follows that $r(\cN)> r(\cN-\cB)$. }

\blue{And, finally, 
it is straight-forward to check that if $|L(\cB)|=1$, 
then $r(\cN-\cB)=r(\cN)-r(\cB)$ and so $r(\cN)> r(\cN-\cB)$.}
\end{proof}

\section{Bounding the weight of a cherry picking sequence from above by the hybrid number}
\label{sect:bound2}

In this section, we complete the proof of Theorem~\ref{main} by establishing the following result.

\begin{theorem} \label{uppernet}
	Suppose that $\cN$ is a phylogenetic network on $X$ that displays two forests $\cF$ and $\cF'$ on $X$. Then 
	there is a cherry picking sequence $\sigma$ for $\cF$ and $\cF'$ with $r(\cN) \ge w(\sigma)$.
\end{theorem}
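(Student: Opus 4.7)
The plan is to prove Theorem~\ref{uppernet} by induction on the quantity $|X|+r(\cN)$. The base case $|X|=1$ is immediate: writing $X=\{x\}$, the one-element sequence $\sigma=(x)$ has weight $0\le r(\cN)$.

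For the inductive step with $|X|\ge 2$, I would locate in $\cN$ some peripheral structure allowing one of the reductions (C1), (C2), or (C3) to be applied to $\cF$ and $\cF'$, while correspondingly producing a phylogenetic network $\cN'$ that still displays the reduced forests with $r(\cN')\le r(\cN)$ in the (C1)/(C3) cases and $r(\cN')\le r(\cN)-1$ in the (C2) case. Applying induction to $\cN'$ and the reduced forests yields a cherry picking sequence $\sigma'$; prepending the element $x$ used in the reduction, with $c(x)\in\{0,1\}$ as dictated by the chosen case, produces a cherry picking sequence $\sigma$ for $\cF$ and $\cF'$ satisfying $w(\sigma)\le r(\cN)$.

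The case analysis would proceed as follows. First, if $\cN$ contains a pendant blob $\cB$ with $|L(\cB)|\le 1$, invoke Lemma~\ref{lem:smallblobs} to replace $\cN$ with $\cN-\cB$, which still displays $\cF$ and $\cF'$ and satisfies $r(\cN-\cB)<r(\cN)$; the induction hypothesis then finishes this case without consuming any element of $X$. So assume every pendant blob of $\cN$ has at least two leaves. If some $x\in X$ is a component of $\cF$ or $\cF'$, apply a (C3)-reduction to $x$; the corresponding network modification is either $\cN-x$ directly (when this is a phylogenetic network, giving $r(\cN-x)\le r(\cN)$) or the smaller network supplied by Lemma~\ref{leafreduction}. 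Otherwise, neither forest has an isolated vertex and, since $|X|\ge 2$, without loss of generality $\cF$ contains some cherry $(x,y)$. By Observation~\ref{note}, we are in one of the configurations matching (C1) or (C2)(a)--(c) with respect to $\cF'$. In the (C1) case, delete $x$ from both $\cF$ and $\cF'$ and pass from $\cN$ either to $\cN-x$ or, if that is not a phylogenetic network, to the network produced by Lemma~\ref{leafreduction} applied within the pendant blob containing $x$. In each (C2) subcase, identify an edge $e$ in an appropriate pendant blob $\cB$ of $\cN$, with $|L(\cB)|\ge 2$, whose removal corresponds under Lemma~\ref{reduction}(b) to the specific forest-edge deletion prescribed by that subcase; this produces a phylogenetic network $\cN'$ displaying the reduced forests with $r(\cN')\le r(\cN)-1$, and prepending $x$ with $c(x)=1$ absorbs the reticulation drop.

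The main obstacle will be the last step: for each subcase of (C2), one must exhibit an edge $e$ in a pendant blob $\cB$ of $\cN$ with $|L(\cB)|\ge 2$ whose removal genuinely implements the chosen forest-edge deletion, and verify that the resulting $\cN'$ displays the prescribed reduced forests. This requires a careful analysis of how the images $\cN[\cF]$ and $\cN[\cF']$ traverse $\cB$ along the paths linking $x$, $y$ and, in the (C2)(a) subcases, the additional endpoint $z$ of the non-matching cherry in $\cF'$; one then chooses among (C2)(a)(i)--(iii), (C2)(b)(i)--(ii), and (C2)(c) the subcase that is actually realisable given the local structure of these two images inside $\cB$. A secondary subtlety is that in case (C2) the leaf set of the forests is preserved while $r(\cN)$ strictly decreases, so the induction measure $|X|+r(\cN)$ is essential to drive the recursion uniformly across all cases, and one must use the condition that every pendant blob has at least two leaves to guarantee that Lemma~\ref{leafreduction} and Lemma~\ref{reduction}(b) are applicable whenever needed.
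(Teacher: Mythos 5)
Your skeleton matches the paper's strategy — induct, strip off cheap structure (small pendant blobs via Lemma~\ref{lem:smallblobs}, isolated components via (C3), cherries common to both forests via (C1)), and then, guided by Observation~\ref{note}, implement a (C2)-reduction by deleting an edge of a pendant blob and invoking Lemma~\ref{reduction}(b). But the proposal stops exactly where the proof actually starts. You name as ``the main obstacle'' the task of exhibiting, in every remaining configuration, an edge $e$ in a pendant blob $\cB$ with $|L(\cB)|\ge 2$ whose removal realises one of the \emph{specific} reductions (C2)(a)(i)--(iii), (C2)(b)(i)--(ii) or (C2)(c) — and then you do not do it. That task is the theorem: the (C2)-reductions only permit deleting a handful of designated forest edges ($e_{x_i}$, $e_y$, or edges attached to a pendant subtree of the form $((x_i,y),p)$ or $((x_i,y),(p,q))$), so one must show that the images of the two forests are forced into a configuration where one of these designated edges has an image edge that is a non-cut edge of $\cB$ lying in exactly one of $\cN[\cF]$, $\cN[\cF']$. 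The paper needs roughly ten interlocking claims for this: that one may assume a cherry $(x,y)$ of $\cF$ with \emph{both} leaves in $L(\cB)$ and $|L(\cB)\cap L(\cT)|\ge 3$ (your ``without loss of generality $\cF$ contains some cherry $(x,y)$'' is not enough — a cherry of $\cF$ can have its image straddle cut-edges and several blobs, and ruling this out is itself an argument); that the matching cherry $(x,z)$ of $\cF'$ also lives in $L(\cB)$; and then a case analysis on the lengths of the two image paths and on which of the edges $e,e',f,f',f''$ around them lie in which image. None of this is supplied or even sketched.

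A second concrete problem is your treatment of the case where $(x,y)$ is a cherry of both forests but $\cN-x$ is not a phylogenetic network. You propose to ``pass to the network produced by Lemma~\ref{leafreduction} applied within the pendant blob containing $x$,'' but $x$ need not be incident to a \emph{pendant} blob (Observation~\ref{help} only gives you a triangle at $x$'s neighbour, which may sit inside a non-pendant blob), and Lemma~\ref{leafreduction} is stated only for pendant blobs. The paper instead shows (inside Claim~6) that this configuration forces $\cB$ to be a $3$-cycle with a very particular attachment, and handles it by deleting the whole blob by hand. Your induction measure $|X|+r(\cN)$ is fine and in fact slightly cleaner than the paper's induction on $r(\cN)$ alone (which silently also shrinks $X$ in the (C1)/(C3) steps), but the measure is not where the difficulty lies. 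As it stands the proposal is a correct outline with the central combinatorial argument acknowledged but absent, so it cannot be accepted as a proof.
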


\blue{Before proving this result, we first state 
a useful lemma that will help us to deal with (C1) and (C3) reductions.
Its proof is straight-forward and left to the reader.}
	
\begin{lemma}\label{reviewer3}
\blue{Suppose that $\cN$ is a phylogenetic network on
$X$, $|X|\ge 2$, and that $\cF$ and $\cF'$ are forests on $X$ that are displayed by $\cN$.
If $x \in X$ is such that $\cN-x$ is a phylogenetic network
then $r(\cN-x)=r(\cN)$, and $\cN-x$ displays both $\cF-x$ and $\cF'-x$.
Moreover, if $\cN$ has a cherry, then the cherry contains a leaf $x$ 
such that $\cN-x$ is a phylogenetic network and either (a) there is a cherry $(x,y)$ contained
in both $\cF$ and $\cF'$, $y \in X\setminus\{x\}$, or (b) $x$ is
a component in one of  $\cF$ or $\cF'$.}
\end{lemma}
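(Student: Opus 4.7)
The plan is to prove the three claims of the lemma in turn, guided by the characterisation in Observation~\ref{help} of when $\cN-x$ is a phylogenetic network. First, the equality $r(\cN-x)=r(\cN)$ follows from a direct count. Forming $\cN-x$ deletes $x$ and the pendant edge $e_x$ and, in the generic situation, suppresses the unique neighbour $u$ of $x$ (which had degree three), replacing two edges by one; the hypothesis that $\cN-x$ is a phylogenetic network ensures, via Observation~\ref{help}, that this suppression creates no multi-edge. Hence $|V|$ and $|E|$ decrease by the same amount, giving
\begin{equation*}
r(\cN-x) = |E(\cN-x)| - |V(\cN-x)| + 1 = |E(\cN)|-|V(\cN)|+1 = r(\cN).
\end{equation*}
The corner case $|X|=2$ with $\cN$ the single edge $\{x,y\}$, in which no suppression occurs, is handled analogously.

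To show that $\cN-x$ displays $\cF-x$ (the argument for $\cF'-x$ is symmetric), I would fix a family of vertex-disjoint images $\{\cN[\cT]:\cT\in\cF\}$ and modify only the image containing $x$. Let $\cT_x\in\cF$ be the tree containing $x$. If $L(\cT_x)=\{x\}$ then $\cN[\cT_x]=\{x\}$ is simply discarded. If $|L(\cT_x)|\ge 2$, then apply the same deletions and suppression used to form $\cN-x$ to the subtree $\cN[\cT_x]$; the result is an image of $\cT_x-x$ in $\cN-x$. For each remaining tree $\cT'\in\cF$ with $\cT'\ne\cT_x$, the image $\cN[\cT']$ does not contain $x$ by disjointness, and if it contains $u$ then, since $u\notin X$ cannot be a leaf of the image, $u$ must have degree exactly two in $\cN[\cT']$ (it has at most two neighbours available because $x\notin V(\cN[\cT'])$); consequently the suppression of $u$ when forming $\cN-x$ merges precisely the two edges of $\cN[\cT']$ incident with $u$, and $\cN[\cT']$ becomes an image of $\cT'$ in $\cN-x$. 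The full family of modified images is a vertex-disjoint system of images of $\cF-x$ in $\cN-x$.

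For the \emph{moreover} clause, let $(x,y)$ be a cherry of $\cN$ and write $u$ for the common neighbour of $x$ and $y$ and $w$ for the third neighbour of $u$. Since $y$ has degree one in $\cN$ there is no edge $\{y,w\}$, so by Observation~\ref{help} $\cN-x$ is a phylogenetic network; symmetrically so is $\cN-y$. If either of $x,y$ is a component of $\cF$ or $\cF'$, condition (b) holds for that leaf, so assume otherwise. Then both $x$ and $y$ lie in trees of $\cF$ with at least two leaves; since $x$ and $y$ each have degree one in $\cN$, the images of the trees of $\cF$ containing $x$ and $y$ respectively each use the unique edge at that leaf and hence each contain $u$, so by vertex-disjointness of the images $x$ and $y$ lie in the same tree $\cT\in\cF$. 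Inside $\cN[\cT]$ the vertex $u$ bears both edges $\{u,x\}$ and $\{u,y\}$; hence after suppressing degree-two vertices either $\cT=\{x,y\}$ (when $\{u,w\}\notin\cN[\cT]$) or $(x,y)$ is a cherry at the image of $u$ in $\cT$ (when $\{u,w\}\in\cN[\cT]$). In either case $(x,y)$ is a cherry of $\cF$, and the identical argument applied to $\cF'$ shows that $(x,y)$ is also a cherry of $\cF'$, so condition (a) holds. The only non-routine point is this final vertex-disjointness deduction; the rest is incidence bookkeeping at $u$.
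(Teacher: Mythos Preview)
Your proof is correct. The paper itself declares the proof ``straight-forward and left to the reader'', so there is no approach to compare against; your argument supplies exactly the routine verification the authors had in mind, with the vertex-disjointness of images doing the real work in the ``moreover'' clause.

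Two minor points of polish, neither a genuine gap. First, when you say ``apply the same deletions and suppression used to form $\cN-x$ to the subtree $\cN[\cT_x]$'', this is slightly imprecise if $u$ is merely a subdivision point of $\cN[\cT_x]$ rather than the image of the branching vertex adjacent to $x$ in $\cT_x$: after deleting $x$ and $e_x$ the vertex $u$ then has degree one in the modified image, and a pendant stub may remain. The clean fix is to take as your image of $\cT_x-x$ the minimal subtree of $\cN[\cT_x]$ spanning $L(\cT_x)\setminus\{x\}$. Second, your ``moreover'' paragraph tacitly assumes a common internal neighbour $u$ of the cherry; the degenerate case $\cN=\{x,y\}$ (where $\cN-x$ is the single vertex $y$) deserves a one-line mention.
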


\orange{Before proving Theorem~\ref{uppernet}, as the proof is 
quite technical, we give a brief road-map of its main points.
The proof works using induction on $r(\cN)$. In the base case
we have $r(\cN)=0$, and so $\cN$ is a phylogenetic tree. 
In this case we can continually pick off leaves that are 
contained in cherries to obtain a cherry picking sequence $\sigma$ with 
$w(\sigma)= r(\cN) =0$.}

\orange{Now, in case $r(\cN)>0$, we essentially repeatedly 
look for non cut-edges $e$ in $\cN$ that are contained in 
the image of one of the forests displayed by $\cN$ but not in
the other. If we can find such an edge $e$, by carefully
considering how the images of the forests are 
embedded in $\cN$, we can then complete the proof by 
removing $e$ from $\cN$, applying Lemmas~\ref{leafreduction} and 
\ref{reduction} and some (C2)-reduction, and 
using the fact that $r(\cN) > r(\cN-e)$. We use
this tool within a series of claims to show that either
we can prove the theorem by induction, or that we can 
further restrict the way that the 
forests are embedded in $\cN$. This eventually
allows us to finally apply our induction hypothesis one final time and thus complete the proof.}

\orange{More specifically, \teal{we first show that
	we can assume that $\cN$ satisfies a certain property (P).
	Then, after noting that} $\cN$ must be pendantless (Claim~1), we 
show that we can also assume that $\cN$ must have
a pendant blob $\cB$ with at least two leaves 
such that no leaf \teal{of} $\cB$ is a component of $\cF$ or $\cF'$ 
and every edge in $\cB$ is in $\cN[\cF]$ or $\cN[\cF']$ (Claims~2, 3 and 4).
In Claims~5 and 6 we then show that either $\cF$ or $\cF'$
must contain a cherry $(x,y)$ \teal{that, out of the four possibilities (i)--(iv) in Observation 3.1,
only satisfies (ii).}
Then in Claims~7--11 we show that we can assume that 
there is a component in $\cF$ that contains a cherry $(x,y)$
and a component in $\cF'$ that contains a cherry $(x,z)$ with $x,y,z \in L(\cB)$.
This allows us to assume that $((x,y),p)$ or $((x,y),(p,q))$ is a pendant subtree of one of 
the forests for some $x,y,p,q \in L(\cB)$ distinct. This permits us to carry 
out a case analysis in Claims~12--14 in order to complete the proof.}\\

\noindent{\blue{\bf Proof of Theorem~\ref{uppernet}}:}
\blue{In what follows, for two sequences 
$\sigma=(x_1,\ldots, x_m)$, $m\geq 1$, and $\sigma'=(y_1,\ldots, y_l)$, $l\geq 1$, 
we denote  by  $\sigma\circ\sigma'$ the sequence $(x_1,\ldots, x_m,y_1,\ldots, y_l)$.}

\blue{We prove the theorem using induction on $r(\cN) \ge 0$.}

\noindent \blue{Base Case: Suppose $r(\cN)=0$. Then $\cN$ is a 
phylogenetic tree on $X$. If $|X|=1$, then $X=\{x\}$, \teal{for}  some $x$, and
$(x)$ is a cherry picking sequence  for $\cF$ and $\cF'$ with $w((x))=0$.  
Now, suppose $m=|X|>1$. Order the elements in $X$ as 
$x_1,x_2,\dots, x_m$ so that, for all $1 \le i \le m-1$, $x_i$ is contained
in a cherry of $\cN|{\{x_i,\dots,x_m\}}$ which is possible since any phylogenetic tree 
with two or more leaves always contains a cherry.
Then, by applying  Lemma~\ref{reviewer3} 
and using either a (C1) or (C3) reduction for each $x_i$, $1 \le i \le m-1$, 
depending on whether (a) or (b) holds \teal{in Lemma~\ref{reviewer3}} for $x_i$, respectively, it follows
that $(x_1,\dots,x_m)$ is 
a cherry picking sequence for $\cF$ and $\cF'$ with 
$r(\cN)= w((x_1,\dots,x_m))=0$.
Hence the base case holds.}

\blue{Now, suppose that $q$ is a positive integer 
and that the theorem holds for all phylogenetic networks with reticulation 
number strictly less than $q$. Suppose that $\cN$ is a
phylogenetic network with $r(\cN)=q$ that displays $\cF$ and $\cF'$. Then 
to complete the proof of the theorem we need  to show that:}\\
  
\noindent {\bf (CP)}  There is a cherry picking sequence $\sigma$ for $\cF$ and $\cF'$ with $r(\cN) \ge w(\sigma)$.\\

\noindent \blue{Note first that we \teal{will assume} from now on that $|X|\geq 3$ since for
\teal{$|X|\in\{1,2\}$}, we can clearly find a cherry picking sequence 
$\sigma$ such that $w(\sigma)=0$ and so $r(\cN)>0=w(\sigma)$ and  hence (CP) holds.}

Starting with $\cN$, repeatedly look for elements $x \in X$
that satisfy (a) there is a cherry $(x,y)$ contained
in both $\cF$ and $\cF'$, $y \in X\setminus \{x\}$, or (b) $x$ is
a component in either $\cF$ or $\cF'$, and such that removing $x$ and $e_x$
and suppressing the resulting degree-2 vertex results
in a phylogenetic network 
until this is no longer possible. Then we
obtain a sequence $(x_1,\dots,x_m)$, $x_i \in X$, $m \ge 1$, where,
for $\cN_1=\cN$, $\cN_{i+1} = \cN_{i}-x_i$, $\cF[1]=\cF$, $\cF[i+1] = \cF[i]-x_i$, 
$\cF'[1]=\cF'$ and $\cF'[i+1] = \cF'[i]-x_i$, $1 \le i \le m$, 
we have for all $1 \le i \le m$ that 
$x_i$ satisfies  (a) or (b)  and $\cN_{i+1} = \cN_i-x_i$ is 
a phylogenetic network on $X\setminus\{x_1,\dots,x_i\}$, 
$\cN_i$ displays the forests $\cF[i]$, $\cF'[i]$ and, by Lemma~\ref{reviewer3}, $r(\cN_i)=r(\cN)$.

In particular, $\cN_{m+1}$ is a phylogenetic network on 
$X\setminus\{x_1,\dots,x_m\}$ with $r(\cN_{\blue{m+1}})=q$
that displays $\cF[m+1]$ and $\cF'[m+1]$,
and there is no $x \in X\setminus\{x_1,\dots,x_m\}$ such that $\cN_{m+1}-x$ is a
phylogenetic network and either (a) or (b) holds.

\teal{Now suppose that we could show that (CP) holds for 
	$\cN_{m+1}$ along with the forests $\cF[m+1]$ and $\cF'[m+1]$ (which we shall do below).} 
Then there must exist a cherry picking sequence $\sigma'$ for $\cF[m+1]$ and 
$\cF'[m+1]$ with $r(\cN_{m+1}) \ge w(\sigma')$.  Hence, $\sigma = (x_1,\dots,x_m) \circ \sigma'$ 	
is a cherry picking sequence for $\cF$ and $\cF'$. 
Since we have applied a (C1) or (C3) reduction to each $x_i$
depending on whether (a) or (b) holds for $x_i$, respectively, 
we obtain $w(\sigma')= w(\sigma)$.
Thus $r(\cN) = r(\cN_{m+1}) \ge w(\sigma')= w(\sigma)$
and so (CP) holds for $\cN$. 

\teal{Thus, to show that (CP) holds for $\cN$, we assume from now on that $\cN$ satisfies the following property:} \\

\noindent\blue{{\bf (P)} There
is no $x \in X$ such that $\cN-x$ is a phylogenetic network on $X \setminus \{x\}$ and
either (a) there is a cherry $(x,y)$ contained
in both $\cF$ and $\cF'$, $y \in X\setminus \{x\}$, or (b) $x$ is
a component in either $\cF$ or $\cF'$.} \\

\noindent {\bf Claim~1:} $\cN$ is pendantless.\\ 

\noindent {\bf Proof of Claim~1:}
Suppose $\cN$ is {\em not} pendantless. Then $\cN$ contains a cherry. 
Thus by Lemma~\ref{reviewer3} there must be some $x \in X$
such that $\cN-x$ is a phylogenetic
network and (a) or (b) is satisfied for $x$. But this contradicts
\teal{the fact that $\cN$ satisfies property (P)}.
$\blacksquare$\\

\blue{By Claim~1 there is some pendant blob in $\cN$. Let $\cB$ be such a blob.}\\

\noindent\blue{{\bf Claim~2:} $|L(\cB)|\ge 2$.} \\

\noindent\blue{{\bf Proof of Claim~2:}
If $|L(\cB)|\le 1$, then
by applying Lemma~\ref{lem:smallblobs} to $\cN$ and using induction it 
follows that (CP) holds. $\blacksquare$}\\

\noindent\blue{{\bf Claim~3:} \teal{No} leaf in $L(\cB)$ is a component in $\cF$ or $\cF'$.}\\

\noindent\blue{{\bf Proof of Claim~3:}
Suppose that some $x \in L(\cB)$ is a component of $\cF$.
Then by \teal{property (P)}, the pseudo-network $\cN-x$ is not a phylogenetic network. 
So we can apply a (C3)-reduction to $x$ and, since $|L(\cB)|\ge 2$ by Claim~2, we can also 
apply Lemma~\ref{leafreduction} to $\cF$, $\cF'$, 
$\cN$ and $x$ to see that (CP) must hold for $\cN$ by induction}. 
$\blacksquare$\\

In what follows, we choose images $\cN[\cF]$ 
and $\cN[\cF']$ of $\cF$ and $\cF'$ in $\cN$, respectively, \blue{as} 
defined in Section~\ref{sect:prelim}.\\


\noindent \blue{{\bf Claim 4:} \teal{Every} pendant edge incident to $\cB$
	and every edge in $\cB$ is contained in $\cN[\cF]$ or $\cN[\cF']$ (or both).}\\

\noindent{{\bf Proof of Claim~4:}} If a pendant edge were
not in $\cN[\cF]$ or $\cN[\cF']$, then either 
$\cF$ or $\cF'$ would contain an element in
$L(\cB)$ that is a component in $\cF$ or $\cF'$, which contradicts our 
assumption \blue{in Claim~3}.
And, if there is some 
$e \in E(\cB)$ such that $e$ is neither contained in $\cN[\cF]$ 
nor in $\cN[\cF']$,
then \blue{since $|L(\cB)|\ge 2$ by Claim~2,} 
Lemma~\ref{reduction}(a) \blue{implies that} there exists a phylogenetic network $\cN'$ on $X$
that displays $\cF$ and $\cF'$ and $r(\cN)>r(\cN')$. 
\blue{We can then apply induction to see that (CP) holds for $\cN$.} $\blacksquare$\\

%

\noindent\orange{{\bf Claim 5:} Suppose that $(x,y)$ is a cherry in a 
component $\cT$ of $\cF$ or $\cF'$ with $x,y \in  L(\cB)$, and 
$\gamma$ is the path in $\cT$ between $x$ and $y$.
If $u,v$ are the vertices in $\cB$ adjacent to $x,y$, respectively,
then $\{x,u\}$ and $\{y,v\}$ are pendant edges of $\cB$,  
the image $\cN[\gamma]$ of $\gamma$ contains both $\{x,u\}$ and $\{y,v\}$ and
has length at least three, and every edge in $\cN[\gamma]$
not equal to $\{x,u\}$ and $\{y,v\}$ is contained in $E(\cB)$.}\\

\noindent\orange{{\bf Proof of Claim~5:}
Since $\cN$ is pendantless by Claim~1, $u \neq v$ and 
$\{x,u\}$ and $\{y,v\}$ are pendant edges of $\cB$.
Moreover, $u$ and $v$
are clearly contained in $\cN[\gamma]$, and $\cN[\gamma]$ must contain at least
three edges since $u\neq v$. And
every edge in $\cN[\gamma]$
not equal to $\{x,u\}$ and $\{y,v\}$ is contained in $E(\cB)$ 
since $\cB$ is a pendant blob in $\cN$. $\blacksquare$} \\

\noindent \blue{{\bf Claim 6:} If $\mathcal T$ is any tree in $\cF$ or in $\cF'$
that contains a cherry $(x,y)$ with $x, y \in L(\cB)$,
then 
out of the four cases (i)--(iv) in Observation~\ref{note} 
only case (ii) holds for $x$ and $y$.}\\

\noindent{{\bf Proof of Claim~6:} }
\blue{Assume without loss of generality that $\cT$ is a component in $\cF$
and that $(x,y)$ is a cherry in $\cT$.}
\orange{Let $u,v,\gamma$ and $\cN[\gamma]$ be as in Claim~5.} 

Suppose Observation~\ref{note}(iv) holds, i.e. $x$ and $y$  are contained in different components in $\cF'$
and neither $x$ nor $y$ is in a cherry of $\cF'$.
Then as $x$ and $y$ are in different components of $\cF'$, there must exist an edge $e'$ in 
$\cN[\gamma]$ that is not in $\cN[\cF']$. 
So, by \blue{Claim 2 and} applying Lemma~\ref{reduction}(b) to $e'$, there 
exists a phylogenetic network $\cN'$ on $X$
with $r(\cN)>r(\cN')$ that displays $\cF'$ and
either $\cF-e_x$ or $\cF-e_y$, say $\cF-e_x$.
Since $x, y \in L(\cB)$ neither $x$ nor $y$ is an isolated vertex in $\cF'$ by Claim 3, 
and so we have applied a (C2)(c)-reduction to $\cF$ and $\cF'$ to obtain $\cF-e_x$. Moreover, by induction, 
there is a cherry picking sequence $\sigma'$ for
$\cF-e_x$ and  $\cF'$ with $r(\cN') \ge w(\sigma')$. Hence, $\sigma=(x)\circ\sigma'$ is 
a cherry picking sequence for $\cF$ and $\cF'$ such that
$r(\cN) \ge r(\cN')+1 \ge w(\sigma') + 1 = w(\sigma)$, and so 
\blue{(CP) must hold.}

Now, suppose Observation~\ref{note}(iii) holds,
i.e, $x$ and $y$ are leaves in the same tree $\cT'$ in $\cF'$, but neither 
$x$ nor $y$ is in a cherry of $\cT'$.
If there exists an edge in $\cN[\gamma]$ that is not contained in $\cN[\cF']$ then
we can apply a similar argument to the one used in the previous paragraph
\blue{to see that (CP) must hold for $\cN$}
using a (C2)(b)\blue{(i)}-reduction \blue{applied to $x$ or $y$} and induction.
So, suppose every edge in $\cN[\gamma]$ is contained in $\cN[\cF']$. 
Let $e$ and $e'$ be the edges in $E(\cB)$ incident 
to $u$ and $v$, respectively, but not in $\cN[\gamma]$
(so, in particular, neither $e$ nor $e'$ is a cut-edge).
Note that $e \neq e'$ since if $e=e'$, then
as every edge in $\cB$ is contained in
$\cN[\cF]$ or $\cN[\cF']$ \blue{by Claim~4}, this would imply that
either $\cF$ or $\cF'$ contains a cycle, a contradiction.
Moreover, since every edge in $\cB$ is contained in
$\cN[\cF]$ or $\cN[\cF']$ \blue{by Claim~4} and $(x,y)$ is a cherry in $\cF$,
it follows that one of the edges $e,e'$, say $e'$, 
is in $\cN[\cF']$ but not in $\cN[\cF]$, and that \blue{$e'$} is in 
the image $\cN[f]$ of the edge $f$ in \blue{$\cT'$ 
that contains the vertex in $\cT'$ that is adjacent to $y$ but that
is not in the path in $\cT'$ between $x$ and $y$}. 
So, by Lemma~\ref{reduction}(b) applied to $e'$ with the roles of $\cF$ and $\cF'$ reversed, there exists a phylogenetic network
$\cN'$ that displays $\cF$ and $\cF'-f$ with $r(\cN)>r(\cN')$.
Since this corresponds to applying a (C2)(b)\blue{(ii)}-reduction to $f$, we can apply induction again 
to $\cN'$ \blue{to see that (CP) must hold.}

\blue{Last, suppose Observation~\ref{note}(i) holds,
i.e., that $(x,y)$ is a cherry in both $\cF$ and $\cF'$. Note that
by \teal{property (P)}, neither the pseudo-network $\cN-x$ nor
the pseudo-network $\cN-y$ is a phylogenetic network.
Moreover, every edge in $\cN[\gamma]$ must be
contained in both $\cN[\cF]$ and $\cN[\cF']$, 
otherwise we could use  Lemma~\ref{reduction}(b)  and a (C2)(b)\blue{(i)}-reduction and induction
to see that (CP) must hold.
Let $e$ and $e'$ be the edges in $E(\cB)$ incident 
to $u$ and $v$, respectively, but not in $\cN[\gamma]$, 
so that $e\neq e'$ and neither $e$ nor $e'$ is a cut-edge of $\cN$. 
Note that since every edge in $\cB$ is contained
in $\cN[\cF]$ or $\cN[\cF']$ by Claim~4, and $(x,y)$ is a 
cherry in both $\cF$ and $\cF'$, it follows
without loss of generality, that $e$ is 
in $\cN[\cF]$ but not in $\cN[\cF']$, and that $e'$ is 
in $\cN[\cF']$ but not in $\cN[\cF]$.}

\blue{We now show that the length of the path $\cN[\gamma]$ must be three
(noting that we are assuming that it has length at least three).
Suppose not. First suppose that the length of $\cN[\gamma]$ is four.
Let $w$ be the vertex in  $\cN[\gamma]$ that is 
distance two from both $x$ and $y$. \teal{Then,
by Claim~4, $e$ and $e'$ must be in either $\cN[\cF]$ or $\cN[\cF']$.
Since $(x,y)$ is a cherry in both $\cF$ and $\cF'$, 
it follows that the edge incident to $\cB$ and not in $\cN[\gamma]$
is neither in $\cN[\cF]$ nor $\cN[\cF']$. Hence,
$w$ must be contained in the cut-edge of $\cN$ that is incident to $\cB$.}
Now, if $e$ and $e'$ have no vertex in common then, 
since $\cN-x$ is a pseudo-network that is not a phylogenetic network, 
when we remove the edge $\{u,x\}$ from $\cN$
and suppress the vertex $u$, we must obtain a multi-edge 
that contains the two vertices in $\cB$ adjacent to $u$ 
that are not equal to $x$. But this is impossible
since one of these two vertices must be equal to $w$, 
\teal{and the degree of $w$ in $\cN$ is 3}.
And, if $e$ and $e'$ have a vertex in common, say $w'$, then 
$\cB$ must be a 4-cycle, with leaves $x,y$ and
another leaf $z$, $z \in X$, such that $\{w',z\}$ is a
pendant edge of $\cB$. But this is impossible \teal{by property (P)}
since $(z,y)$ is then a cherry in both $\cF$ and $\cF'$ and
$\cN-z$ is a phylogenetic network.
Finally, note that since $(x,y)$ is cherry in both
$\cF$ and $\cF'$ and $\cB$ is a pendant blob, it
follows that the length of $\cN[\gamma]$ 
cannot be greater than four by Claim~4.}

\blue{Now, assume that the length of 
the path $\cN[\gamma]$ is three.
If $e$ and $e'$ have no vertex in common, then 
since $\cN-x$ is a pseudo-network that is not a phylogenetic network, 
when we remove the edge $\{u,x\}$ from $\cN$
and suppress the vertex $u$, we must obtain a multi-edge 
that contains the two vertices in $\cB$ adjacent to $u$ 
that are not equal to $x$. But this is impossible
since one of these two vertices must be equal to $v$ as 
$\cN[\gamma]$ has length three, and $e$ and $e'$ have 
no vertex in common. \teal{And, finally for the case where
$\cN[\gamma]$ has length three, if $e$ and $e'$ have a vertex
in common, say $c$, then $\cB$ is the 3-cycle $u,v,c$ and 
there is a vertex $c' \not= u,v$ with $c' \not\in X$  such that $\{c,c'\}$ is 
a non-pendant cut-edge of $\cN$.} So we can
apply a (C1)-reduction to $x$, make a new network $\cN'$ on $X\setminus\{x\}$ with $r(\cN)>r(\cN')$
by removing $x$ and all of the vertices in $\cB$ and the edges in $\cN$ containing
them and adding the edge $\{c',y\}$, and then apply
induction to see that (CP) holds.}
$\blacksquare$\\

\noindent \blue{{\bf Claim 7:} \teal{Neither} $\cF$ nor $\cF'$ contains a cherry in
the form of an isolated edge $\{x,y\}$ with $x,y \in L(\cB)$. }\\

\noindent\blue{{\bf Proof of Claim~7:} Suppose that $\cT$ is a cherry in $\cF$ 
in the form of the isolated edge $\{x,y\}$ with  $x,y \in L(\cB)$. 
\orange{Let $u,v,\gamma$ and $\cN[\gamma]$ be as in Claim~5.} 
By Claim~6, 
Observation~\ref{note}(ii) holds and so
we may assume that there is a cherry in $\cF'$, say $(x,z)$, 
with $z\in \blue{X\setminus\{y\}}$.
Now, all of the edges in $\cN[\gamma]$ must be
contained in $\cN[\cF']$, otherwise we
can apply Lemma~\ref{reduction}(b) to any edge on $\cN[\gamma]$ not contained in $\cN[\cF']$ and a (C2)(a)(i)-reduction to $x_i$ plus induction
to see that (CP) holds. But this implies that the 
edge $e$ in $\cB$ that contains the vertex adjacent to $x$ and
that is not contained in $\cN[\gamma]$
must be contained in the image of the path 
in $\cF'$ between $x$ and $z$. Indeed, if this were not the
case then, as $e$ must be contained in $\cN[\cF']$ by Claim~4, 
this would imply that $(x,z)$ is not a cherry in $\cF'$.
Thus, as $\{x,y\}$ is a component in $\cF$ and $e$ is not a cut-edge for $\cN$, 
we can apply Lemma~\ref{reduction}(b) to $e$ and the roles of $\cF$ and $\cF'$ reversed
to see that (CP) holds by applying a (C2)(a)(i)-reduction to $x$ and induction.}
$\blacksquare$\\

\noindent \blue{{\bf Claim 8:} \teal{There} exists some tree $\cT$ in $\cF$ 
that contains a cherry $(x,y)$ with $x,y \in L(\cB)$, and such that
$|L(\cB) \cap L(\cT)|\ge 3$.}\\

\noindent\blue{{\bf Proof of Claim 8:} First note that
there must be some cherry $(x,y)$ in $\cF$ with $x,y \in L(\cB)$.
\orange{Indeed, suppose first that $\cN=\cB$. Then, $L(\cB)=X$, and so, by Claim~3, $\cF$ must contain a tree $\cT$ with $|L(\cT)|>1$ and $L(\cT)\subseteq L(\cB)$ and so such a cherry must exist.}  So suppose $\cN \neq \cB$.
Note that if $\cF$ contains a tree $\cT$ with $L(\cB)\subseteq L(\cT)$, then since $|L(\cT )|  \ge  |L(\cB)|  \geq 2$, there must be a cherry  $(x, y)$ in $\cT$ with $x, y \in L(\cB)$, otherwise $\cN$ would not display $\cT$. And, if $\cF$ contains no tree $\cT$ with  $L(\cB) \subseteq L(\cT )$, then $\cF$ must contain  a tree $\cT'$ with $L(\cT')$ a proper subset of $L(\cB)$, otherwise  $\cN$ would again not display $\cF$. But then, $\cT'$ contains a cherry  $(x, y)$ with $x, y \in L(\cB)$.
}

\blue{Now, let $\cT$ be the tree in $\cF$ that contains 
a cherry $(x,y)$ with $x,y \in L(\cB)$.}
\blue{First note that, by Claim~7, $|L(\mathcal T)| \ge 3$. We now show that
we must also have $|L(\mathcal B)| \ge 3$. Suppose for
contradiction that this is not
the case. Then, by Claim~2, we must have $L(\mathcal B) = \{x,y\}$, and
since $|L(\mathcal T)| \ge 3$, we must also have $\mathcal N \neq \mathcal B$.
Moreover, by Claim~3, neither $x$ nor $y$ is a component of $\mathcal F'$.
Hence, $(x,y)$ must be a cherry in $\mathcal F'$ since
otherwise $\mathcal N$ would not display $\mathcal F'$. But this
is a contradiction since it implies that $(x,y)$ is
a cherry in both $\mathcal F$ and $\mathcal F'$ so that Observation 3.1(i) holds, which 
is not possible by Claim~6. Thus, $|L(\cB)|\geq 3$.}  

\blue{Now, if $\cN = \cB$, 
then since $|L(\cT)|\ge 3$ and $|L(\cB)|\ge 3$, it 
immediately follows that $|L(\cB) \cap L(\cT)|\ge 3$.
So, suppose $\cN \neq \cB$. Then as $|L(\cB)|\ge 3$ it follows
that there is some $s \in \blue{L(\cB)\setminus\{x,y\}}$. If $s \in L(\cT)$, then 
$|L(\cB) \cap L(\cT)|\ge 3$ and Claim~8 follows.
Otherwise $s\not\in L(\cT)$ and so there must be some tree $\cT'$ in $\cF$ with $s \in L(\cT')$.
Now, since $|L(\cT)|\ge 3$, it follows that 
$\cN[\cT]$ must contain the non-pendant cut-edge incident to $\cB$, 
since otherwise $|L(\cB) \cap L(\cT)|\ge 3$. Thus, as 
$\cN[\cT']$ is disjoint from $\cN[\cT]$ by the definition of the image of a forest, we must 
have $L(\cT') \subseteq L(\cB)$. 
But $\cT'$ is not a single leaf by Claim~3, and it is not a
cherry in the form of an isolated edge by Claim~7.
Hence $|L(\cT')|\ge 3$, and so $\cT'$ contains a cherry $(r,t)$ with $r,t\in L(\cB)$ and $|L(\cT') \cap L(\cB)|\ge 3$. $\blacksquare$}\\

\blue{Now, by Claim~8, we shall  
assume that we have a tree $\cT$ in $\cF$ that has a
cherry $(x,y)$ with $x,y \in L(\cB)$ and such that
$|L(\cB) \cap L(\cT)|\ge 3$. 
Moreover, by Claim~6, Observation~\ref{note}(ii) holds, 
and so, as well as the cherry $(x,y)$ in $\cT$, 
there is a cherry in $\cF'$, say $(x,z)$, 
with $z\in \blue{X\setminus\{y\}}$.
Let $\cT'$ denote the tree in $\cF'$ that contains 
the cherry $(x,z)$.}
\orange{For the cherry $(x,y)$ in $\cT$ and
$\gamma$ the path in $\cT$ between $x$ and $y$, let $u,v$ and $\cN[\gamma]$ 
be as in Claim~5.
In addition, let $\delta$ be the path in $\cF'$ between $x$ and $z$ 
and let $\cN[\delta]$ be its image
in $\cN[\cF']$, noting  that $\cN[\delta]$ must have length at least three by Claim~5.
We picture this configuration in Fig.~\ref{fig:configuration2}.}\\

\noindent\orange{{\bf Claim 9:} \teal{Every} edge in $\cN[\gamma]$ 
or in $\cN[\delta]$ that is 
in $E(\cB)$ must be contained in $\cN[\cF']$ or $\cN[\cF]$, respectively.
In particular, since $x,y \in L(\cB)$, $\cN[\gamma]$ is contained in $\cN[\cF']$.}\\

\noindent \orange{{\bf Proof of Claim 9: }
We consider $\cN[\gamma]$; the proof for $\cN[\delta]$ is similar.
Suppose $g$ is an edge that is in  $\cN[\gamma]$  and $E(\cB)$, but $g$ is not in $\cN[\cF']$.
Then we can apply a (C2)(a)(i)-reduction to $x$ or $y$, remove the edge $g$ and apply Lemma~\ref{reduction}(b) to the removed edge plus induction
to see that (CP) must hold. Thus, $\cN[\gamma]$ must be contained in $\cN[\cF']$, by Claim 3. $\blacksquare$}\\

\noindent\orange{{\bf Claim 10:} Let $e$ and $e'$ be the edges in $\cN$ that are not in
the path $\cN[\gamma]$ and that contain $u$ and $v$, respectively (see Fig.~\ref{fig:configuration2}).
Then  
$e$ is in $\cN[\cT]$, $e'$ is not in $\cN[\cT]$, and both $e$  and $e'$ are in $\cN[\cF']$.}\\

\noindent\blue{{\bf Proof of Claim~10:} 
Note that neither $e$ nor $e'$ is a cut-edge.
Moreover, we can assume that $e$ must be contained in $\cN[\cT]$. Indeed, 
if not then $e$ must be contained in $\cN[\cF']$, by
Claim~4. Thus, since $\cN[\gamma]$ is contained in $\cN[\cF']$ \orange{by Claim~9}, 
$e$ is an edge in $\cN[\delta]$. Thus, we can apply a (C2)(a)(i) 
reduction to $z$, remove $e$, and apply Lemma~\ref{reduction}(b) 
to $e$ plus induction to see that (CP) holds. Hence, in summary, 
since $(x,y)$ is a cherry in $\cT$, we must have that $e$ is an 
edge in $\cN[\cT]$ and that $e'$ is not an edge in $\cN[\cT]$.}

\blue{Now, since $e'$ is not an edge in $\cN[\cT]$, by Claim ~4 it follows that
$e'$ is in $\cN[\cF']$.
Moreover $e$ is also in $\cN[\cF']$, since otherwise
as $(x,z)$ is a cherry in $\cT'$ it follows
that $\cF'$ contains the component $((x,y),z)$ and so we
could apply a (C2)(a)(i)-reduction to $z$, remove $e$ and 
apply Lemma~\ref{reduction}(b) to $e$ plus induction to see that (CP) must hold. $\blacksquare$}\\

\begin{figure}[t]
	\center
		\scalebox{1.4}{\input{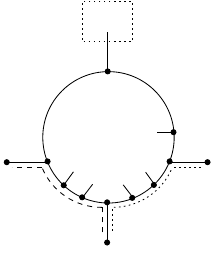_t}}
	\caption{\orange{A schematic representation 
        illustrating the images $\cN[\gamma]$ and $\cN[\delta]$ of the cherries $(x,y)$ and
        $(x,z)$ in the blob $\cB$ as well as the associated edges $e$ and $e'$ used in the proof of Theorem~\ref{uppernet}. Note that we show that $z \in L(\cB)$ in Claim~11.}
	\label{fig:configuration2}
 }
\end{figure}

\noindent \orange{{\bf Claim~11:} $z \in L(\mathcal B)$,
every edge in $\cN[\delta]$ that does not contain $x$ or $z$ is in $E(\cB)$,
and  $\cN[\delta]$ is contained in $\cN[\cT]$.} \\

\noindent \orange{{\bf Proof of Claim~11:} Suppose that $z$ is not contained in  $L(\mathcal B)$. Then we
must have $\cN \neq \mathcal B$, and $\cN[\delta]$ must contain
a non-pendant cut-edge, such that  one of the vertices in this
cut-edge, $w$ say, is contained in $V(\cB)$.}

\orange{Consider the edge $e'$ (see Fig.~\ref{fig:configuration2}). 
Then since $e'$ is in $\cN[\cF']$ by Claim~10 and it is not a cut-edge,  
there must be a cherry in $\cT'$, $(r,s)$ say, with $r,s \in L(\cB) \setminus \{x,y\}$ (since otherwise $\cT'$ would only have four leaves and we could apply a (C2)(a)(i)-reduction and Lemma~\ref{reduction}(b) to $e'$).
But then by Claim~6, without loss of
generality $\cT$ must contain a cherry
$(r,a)$, with $a \in X \setminus \{x,y,s\}$. Moreover , since
$w$ is a vertex in $ \cN[\cT]$ we must have $a \in L(\cB)$ as every edge in $E(\cB)$ and $\cN[\delta]$ is contained in $\cN[\cT]$ by 
Claim 9.
Hence, instead of considering the cherries $(x,y)$ and $(x,z)$ in $\cT$
and $\cT'$, we can instead consider the cherries $(r,s)$ and $(r,a)$ in 
$\cT'$ and $\cT$, respectively, and, reversing the roles of $\cT$ and $\cT'$, apply
our arguments to these cherries instead. In particular, it follows
that we may assume $z \in L(\mathcal B)$.}

\orange{The last statements in the claim now follow immediately by Claims~3 and 9. $\blacksquare$}\\

\orange{Now, since $\cT$ contains the cherry $(x,y)$ and $|L(\cB)\cap L(\cT)|\geq 3$, we  \teal{will} assume that either 
$((x,y),p)$ or $((x,y),(p,q))$ is a pendant subtree of 
$\cT$ with $x,y,p,q \in L(\cB)$ distinct. Note that by Claim~5, the length of the path $\cN[\delta]$ is at least three. }

\orange{We next show that if the length of $\cN[\delta]$ is three, then (CP) holds, which completes the proof
of the theorem. Indeed, 
suppose that $\cN[\delta]$ has length three. Then without loss of generality $z=p$.}
\blue{Moreover, if $\cT$ contains
$((x,y),p)$, then setting $\mathcal S=((x,y),p)$, we
can apply a (C2)(a)(ii)-reduction to $\mathcal S$, remove
the edge $e''$, and apply  Lemma~\ref{reduction}(b) plus induction 
to see that (CP) must hold. And
if $\cT$ contains $((x,y),(p,q))$ then we must have $\cT = ((x,y),(p,q))$, 
in which case we can apply a (C2)(a)(iii)-reduction to $q$, remove
the edge $e''$,  and apply  Lemma~\ref{reduction}(b) plus induction 
to see that (CP) must hold.}
\begin{figure}[t]
	\center
		\scalebox{1.4}{\input{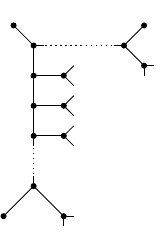_t}}
	\caption{\blue{The configuration in $\cN$ that underpins the proofs of Claims~12--14 of Theorem~\ref{uppernet}.}
	\label{fig:theorem6-1-fig2}
 }
\end{figure}
\blue{Now, suppose that the length of the path $\cN[\delta]$ is greater than three.
Let $w$, $w'$ and $w''$ be the vertices in $\cN[\delta]$ that 
are distance two, three and four from $x$, respectively.
Let $f$, $f'$ and $f''$ be the edges in $\cN$ that are
not in $\cN[\delta]$ and that contain
the vertices $w$, $w'$ and $w''$, respectively (see Fig.~\ref{fig:theorem6-1-fig2})}.\\

\noindent \blue{{\bf Claim 12:} $f$ is in $\cN[\cT]$.}\\

\noindent \blue{{\bf Proof of Claim 12:} Suppose not. Then, since $f$ is not in $\cN[\cF']$ because $(x,z)$ is a cherry in $\cF'$, 
by Claim~4 $f$ must be a cut-edge of $\cN$. Moreover, $f$ 
is not a pendant edge otherwise $\cF'$ would contain 
a component that is a vertex which contradicts Claim~3.
Thus, $f'$ is not a cut-edge of $\cN$
as $\cB$ is a pendant blob,
and so $f'$ is contained in $\cN[\cT]$ by Claim~4.}

\blue{Now, if $z=w''$, then $\cT$ must contain $((x,y),z)$ 
as a proper pendant subtree and we can apply a (C2)(a)(ii)-reduction to $((x,y),z)$, remove
the edge $f'$, and apply  Lemma~\ref{reduction}(b) plus induction 
to see that (CP) must hold. And, if $z \neq w''$,
then as $f''$ is not a cut-edge of $\cN$ (since $f$ is) 
it follows that $f''$ is in $\cN[\cT]$ by Claim~4.
But then if 
$\cT$ contains $((x,y),p))$ as a pendant subtree then it
must be proper because two edges on $\cN[\delta]$ must be incident with $w''$. Hence,
we can apply a (C2)(a)(ii)-reduction to $p$, remove
the edge $f'$, and apply  Lemma~\ref{reduction}(b) plus induction 
to see that (CP) must hold. And, if $\cT$ contains
$((x,y),(p,q))$ as a pendant subtree, then either, without
loss of generality, $z=p$, or $f'$ is in the image of 
the path from $x$ to $p$ (or $q$). Furthermore, 
$((x,y),(p,q))$ must be proper because two of the edges that share a vertex with $f'$ or $f''$ are also contained in $\cN[\delta]$. Hence,
we can either apply a (C2)(a)(ii)-reduction to $q$ and remove $f''$
or a (C2)(a)(iii)-reduction to $(p,q)$ and remove $f'$ to
see that (CP) holds using Lemma~\ref{reduction}(b) and induction.
$\blacksquare$}\\

\noindent \blue{{\bf Claim~13:} If $\cT$ contains
$((x,y),z)$ as a pendant subtree, then $f$ is in the image of the pendant 
edge in $\cT$ that contains $p$ and $((x,y),z)$ is a proper
pendant subtree of $\cT$. }\\

\noindent \orange{{\bf Proof of Claim~13:}
Suppose that the claim is not true. Then since $e$ is an edge in $\cN[\cT]$ 
and $f$ is an edge in $\cN[\cT]$ by Claim~12, $\{w,w'\}$ is in the image of the pendant edge in $\cT$ that contains $p$, and $((x,y),p)$ is a proper pendant subtree of $\cT$. Now since every edge in $\cN[\delta]$
is contained in $\cN[\cT]$ by Claim~11, the edge $e''$ must be in the image of the pendant edge in $\cT$ that contains $p$. Moreover, $e''$ is not a cut-edge of $\cN$ since $p\in L(\cB)$. so we can apply a (C2)(a)(iii)-reduction to $p$, remove $e''$ and apply Lemma~\ref{reduction}(b) plus induction to see that (CP) holds. $\blacksquare$}\\


\noindent \blue{{\bf Claim~14:} If $\cT$ contains
$((x,y),(p,q))$ as a pendant subtree, then $f$ is in the image of the pendant 
edge in $\cT$ that shares a vertex with the cherry $(p,q)$ and $((x,y),(p,q))$ is a proper pendant subtree of $\cT$. 
 }\\
 
\noindent \blue{{\bf Proof of Claim 14:}
Suppose that the claim is not true. Then
using similar arguments to those used in the
proof of Claim~13, it follows that 
without loss of generality, $z=p$, and 
either $w''=z$ and $f'$ is in the image
of the pendant edge in $\cF'$ that contains $q$,
or $f'$ is a cut-edge that is not a pendant 
edge, and $f''$ is in the image
of the pendant edge in $\cF'$ that contains $q$.
In either case, $((x,y),(p,q))$ is a proper pendant subtree of $\cT$. Furthermore, in either case  by removing $f'$ or $f''$ respectively
and applying a (C2)(a)(iii)-reduction to $q$
we can use Lemma~\ref{reduction}(b) and induction
to see that (CP) holds.
$\blacksquare$}\\

\blue{Finally, to complete the proof of the theorem, 
if $\cT$ contains $((x,y),z)$ as a pendant subtree, then, by  Claim~13, we can apply a 
(C2)(a)(ii)-reduction to $p$, remove $f$ and
apply Lemma~\ref{reduction}(b) plus induction 
to see that (CP) must hold. And, if $\cT$ contains
$((x,y),(p,q))$ as a pendant subtree, then by Claim~14, we can apply a 
(C2)(a)(iii)-reduction to $(p,q)$, remove $f$ and
apply Lemma~\ref{reduction}(b) plus induction 
to see that (CP) must hold. 
\qed}

\section{Discussion}\label{sect:conclude}

In this paper, we have proven that the hybrid number of two binary phylogenetic trees
can be given in terms of cherry picking sequences.  There are several interesting 
future directions of research. For example, \teal{Humphries et al. \cite{HLS13} have considered a certain type of cherry picking sequence to characterize arbitrary size collections of rooted phylogenetic trees that can be displayed by a rooted phylogenetic network that is time consistent and tree-child.} \teal{It would be interesting to investigate extensions of our results to computing the hybrid number of an arbitrary collection
of not necessarily binary phylogenetic trees (or forests) 
that can be displayed by an unrooted phylogenetic network of a given class such as tree-child or orchard (see \cite{DL24} for the 
definition of such networks)}. 

As mentioned in the introduction, the hybrid number of two 
unrooted binary phylogenetic trees is equal to the TBR distance between the two trees. Recently, there
has been some interest in improving algorithms to compute the TBR distance
(see e.g. \cite{LK19,K22}), and some of the data reductions rules introduced  in these
papers are similar to  the rules that we used to define cherry picking sequences 	
(for example, the reduction described 
in (C2)(a)(ii) is similar to the (*,3,*)-reduction in \cite[Section 3.1]{LK19}). 
It would be interesting
to see if there is a deeper connection between these two concepts, and
to investigate whether, for example, the results presented here could be used
to improve depth-bounded search tree algorithms to compute the TBR distance.
To investigate this connection,  a first step might be to understand whether or not
all of the reductions in the definition of a cherry picking sequence are needed, or whether a somewhat simpler list of reductions might be found to define a cherry picking sequence so that Theorem~\ref{main} still holds.

Finally, we have considered the hybrid number $h(\cF,\cF')$ for $\cF$ and $\cF'$ \blue{being} an 
arbitrary pair of \blue{binary} forests on $X$. In case we restrict $h$ to pairs of phylogenetic trees on $X$, 
$h$ is the TBR distance on the set of phylogenetic trees on $X$. It  would be 
interesting to understand properties of $h$ 
for pairs of forests in general (e.\,g.\,is $h$ related to some kind of TBR distance 
on the set of forests on $X$?).

\acknowledgements
KTH and VM would like to thank the School of Computer Science at the University of Auckland for hosting them in December 2019. Part of this paper is based upon work supported by the National Science Foundation under Grant No. DMS-1929284 while all authors were in residence at the Institute for Computational and Experimental Research in Mathematics in Providence, Rhode Island, US, during the {\it Theory, Methods, and Applications of Quantitative Phylogenomics} program. All authors would like to thank the anonymous referees for numerous comments and suggestions to improve the exposition of the paper.


\end{document}